\newcommand{\mr}{\mbox{mr}}
\DeclareMathOperator{\sign}{sign}
\newcommand{\reals}{\mathbb{R}}
\newcommand{\rationals}{\mathbb{Q}}
\DeclareMathOperator{\rank}{rank}
\newtheorem{theorem}{Theorem}
\newtheorem{example}[theorem]{Example}
\newtheorem{observation}[theorem]{Observation}
\newtheorem{conjecture}[theorem]{Conjecture}
\newtheorem{problem}[theorem]{Problem}
\newtheorem{lemma}[theorem]{Lemma}
\newtheorem{corollary}[theorem]{Corollary}
\title{  \ \\*[-0.25in] Minimum ranks of sign patterns via sign vectors and duality
}
\author{Marina Arav\footnote{Corresponding author, E-mail: marav@gsu.edu}, Frank J. Hall, Zhongshan Li, \\
 Hein van der Holst, John Sinkovic,  Lihua Zhang  \\
Department of Mathematics and Statistics \\
Georgia State University \\
Atlanta, GA 30303, USA
}
\date{}
\begin{document}

\maketitle

\begin{abstract}A {\it sign pattern matrix} is a matrix whose entries are from the set $\{+,-, 0\}$. The minimum rank of a sign pattern matrix $A$ is the minimum of the ranks of the real matrices whose entries have signs equal to the corresponding entries of $A$. It is shown in this paper that for any $m \times n$ sign pattern $A$ with minimum rank $n-2$, rational realization of the minimum rank is possible. This is done using a new approach involving sign vectors and duality. 
It is shown that for each integer $n\geq 9$, there exists a nonnegative integer $m$ such that there exists an $n\times m$ sign pattern matrix with minimum rank $n-3$ for which rational realization is not possible. A characterization of $m\times  n$ sign patterns $A$ with minimum rank $n-1$ is given (which solves an open problem in Brualdi et al.  \cite{Bru10}), along with a more general description of sign patterns with minimum rank $r$, in terms of sign vectors of certain subspaces. A number of results on the maximum and minimum numbers of sign vectors of $k$-dimensional subspaces of $\mathbb R^n$ are obtained. In particular, it is  shown that the maximum  number of sign vectors of $2$-dimensional subspaces of $\mathbb R^n$ is $4n+1$. Several related open problems are stated along the way. 

\end{abstract}

\medskip

\noindent {\it AMS classification}: 15B35, 15B36, 52C40
  
\medskip

\noindent {\it Keywords}: Sign pattern matrix, sign vectors, minimum rank, maximum rank, rational realization, duality
 
\bigskip

\medskip

\section{Introduction}

An important part of the combinatorial matrix theory is the study of sign pattern matrices, which has been the focus of extensive research for the last 50 years (\cite{Bru95}, \cite{Hall07}). A sign pattern matrix is a matrix whose entries are from the set $\{+, -, 0\}$. For a real matrix $B$, sign($B$) is the sign pattern matrix obtained by replacing each positive (respectively, negative, zero) entry of $B$ by $+$ (respectively, $-, 0$). For a sign pattern matrix $A$, the sign pattern class (also known as the qualitative class)  of $A$, denoted $Q(A)$, is defined as 
$$ Q(A)= \{ B : B \text{ is a real matrix and }\mbox{sign}(B) = A \}.$$

The minimum rank  of a sign pattern matrix $A$, denoted mr$(A)$, is the minimum of the ranks of the real matrices in $Q(A)$. Determination of the minimum rank of a sign pattern matrix in general is a longstanding open problem (see \cite{CJ82}) in combinatorial matrix theory. Recently, there has been a number of papers concerning this topic, for example \cite{A1}-\cite{Gra96},  \cite{Hall04}-\cite{Raz10}. In particular, matrices realizing the minimum rank of a sign pattern have applications in the study of neural networks \cite{Del} and communication complexity \cite{Raz10}.

More specifically, in the study of communication complexity in computer science, $(+, -)$ sign pattern matrices arise naturally; the minimum rank (also known as the \emph{sign-rank}) of the sign patterns plays an important role, as the minimum rank of a corresponding sign pattern matrix essentially determines the unbounded-error randomized communication complexity of a function.  New results on the minimum ranks of sign pattern matrices would have significant impact on the area of communication complexity, as well as several areas of mathematics, such as matroid theory, polytope theory, computational geometry, graph theory and combinatorics \cite{Gra96}.

In mathematics, rational realization is an important theme. For example, the study of the existence of rational (or integer) solutions of Diophantine equations is well-known. Steinitz's celebrated theorem stating that the  combinatorial type of every 3-polytope is rationally realizable is a far-reaching result \cite{Zieg}. In combinatorics, rational realizability of certain point-line configurations is an important problem (\cite{Grunbaum09, Sturmfels90}); this is closely related to the realizability of the minimum rank of a sign pattern with minimum rank 3 \cite{Jing13}.
More generally,  the rational realizability of the minimum rank of a sign pattern matrix is equivalent to the rational realizability of a certain point-hyperplane configuration. 


In \cite{A1}, several classes of sign patterns $A$ for which  rational realization of the minimum rank is guaranteed are identified, such as when $A$ is entrywise nonzero, or the minimum rank of $A$ is at most 2, or the minimum rank of $A$ is at least $n-1$, where $A$ is $m \times n$.
 It has been shown in \cite{KP}, through the use of a result in projective geometry, that rational realization of the minimum rank is not always possible.  Specifically, in \cite{KP}, the authors showed that there exists a $12\times 12$ sign pattern matrix  with minimum rank 3 but there is no rational realization of rank 3 within the qualitative class of the sign pattern. Independently, Berman et al. \cite{Ber} also provided an example of  a sign pattern for which the rational minimum rank is strictly greater than the  minimum rank over the reals.
Both of these papers use techniques based on matroids. More recently, Jing et al. \cite{Jing13} found a $9\times 9$ sign pattern matrix  with minimum rank 3 whose rational minimum rank is 4. 

We note that Li et al. \cite{Li13} showed that for every $n \times m$ sign pattern with minimum rank 2, there is an integer matrix in its sign pattern class each of whose entries has absolute value at most $2n-3$ that achieves the minimum rank 2.

One goal of this paper is  to show that for any $m \times n$ sign pattern $A$ with minimum rank $n-2$, rational realization of the minimum rank is possible. This is done using a new approach involving sign vectors and duality. 
Furthermore, it is shown that for each integer $n\geq 9$, there exists a nonnegative integer $m$ such that there exists an $n\times m$ sign pattern matrix with minimum rank $n-3$ for which rational realization is not possible.

Another goal is to use sign vectors of subspaces to investigate the minimum ranks of sign patterns further. In particular, a characterization of $m\times  n$ sign patterns $A$ with minimum rank $n-1$ is given, which solves an open problem posed in Brualdi et al. \cite{Bru10}.  We also obtain a characterization of L-matrices using sign vectors  and a more general description of sign patterns with minimum rank $r$, in terms of sign vectors of certain subspaces. A number of results on the maximum and minimum numbers of sign vectors of $k$-dimensional subspaces of $\mathbb R^n$ are obtained. In particular, it is  shown that the maximum  number of sign vectors of $2$-dimensional subspaces of $\mathbb R^n$ is $4n+1$. Several related open problems are also discussed.

\section{Sign vectors and duality}

For any vector $x\in \mathbb{R}^n$, we define the \emph{sign vector} of $x$, $\sign(x)\in \{+,-,0\}^n$, by 
\begin{equation*}
\sign(x)_i = \begin{cases}
+ \quad &\text{if }x_i>0,\\
0 \quad &\text{if }x_i=0,\\
- \quad &\text{if }x_i<0.
\end{cases}
\end{equation*}
For any  subspace $L\subseteq \mathbb{R}^n$, we define the \emph{set of sign vectors} of $L$ as
\begin{equation*}
\sign(L) = \{\sign(x)~|~x\in L\}.
\end{equation*}

\begin{observation} \label{Obs1} If $K$ and $L$ are  subspaces of $\reals^n$ with $\sign(K)=\sign(L)$, then $\dim(K) = \dim(L)$. 
\end{observation}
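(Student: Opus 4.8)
The plan is to recover the dimension of a subspace purely from the combinatorial data of its sign-vector set. Equip $\{+,-,0\}^n$ with the coordinatewise partial order $\preceq$ in which $0\prec+$ and $0\prec-$ while $+$ and $-$ are incomparable; thus $\sigma\preceq\tau$ means $\sigma_i\in\{0,\tau_i\}$ for every $i$. This order is intrinsic to any subset of $\{+,-,0\}^n$, so the hypothesis $\sign(K)=\sign(L)$ makes $(\sign(K),\preceq)$ and $(\sign(L),\preceq)$ the same poset. Hence it suffices to prove the lemma that for every subspace $L\subseteq\reals^n$ the maximum number of strict steps in a $\preceq$-chain in $\sign(L)$ equals $\dim(L)$; the two subspaces then share this number, and therefore share their dimension.

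For the upper bound I would take any strict chain $\sigma^{(0)}\prec\sigma^{(1)}\prec\cdots\prec\sigma^{(k)}$ in $\sign(L)$ and choose representatives $x_j\in L$ with $\sign(x_j)=\sigma^{(j)}$. Since $\sigma\preceq\tau$ forces $\mathrm{supp}(\sigma)\subseteq\mathrm{supp}(\tau)$, the supports are nested, and strictness makes them strictly nested. I then claim $x_1,\dots,x_k$ are linearly independent: in a hypothetical relation $\sum c_j x_j=0$, take the largest $m$ with $c_m\neq0$ and a coordinate $i\in\mathrm{supp}(\sigma^{(m)})\setminus\mathrm{supp}(\sigma^{(m-1)})$; every $x_j$ with $j<m$ vanishes at $i$, so the $i$-th coordinate of the relation reads $c_m(x_m)_i=0$ with $(x_m)_i\neq0$, a contradiction. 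Thus $k\le\dim(L)$.

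For the lower bound I would exhibit a chain with $d=\dim(L)$ strict steps. The key preparatory move is to pick a \emph{support-increasing} basis $b_1,\dots,b_d$, meaning $\mathrm{supp}(b_j)\not\subseteq\mathrm{supp}(b_1)\cup\cdots\cup\mathrm{supp}(b_{j-1})$ for each $j$; such a basis exists because the rows of a reduced row-echelon basis, listed in reverse order of their pivot columns, each introduce their pivot coordinate as a fresh one (a row with pivot column $c$ is zero in every column to the left of $c$, hence zero in the pivot columns of all earlier-listed rows). With such a basis fixed, set $x_0=0$ and $x_j=\sum_{i=1}^{j} t^{\,i-1} b_i$ for a sufficiently small $t>0$. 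A lexicographic (leading-term) argument shows that for small $t$ the sign of each coordinate of $x_j$ is governed by the lowest-indexed basis vector nonzero there, which gives $\sign(x_{j-1})\preceq\sign(x_j)$; the fresh pivot coordinate of $b_j$ is zero in $x_{j-1}$ but nonzero in $x_j$, forcing the step to be strict. This produces the desired chain $\sign(x_0)\prec\cdots\prec\sign(x_d)$.

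The main obstacle is the lower bound, and within it the strictness of the constructed chain: a naive basis and unit-coefficient sums can suffer coordinate cancellations or fail to activate new coordinates, collapsing the chain. Both difficulties are resolved by the two devices above --- the reversed row-echelon basis guarantees a new coordinate at every step, and the geometrically decaying weights $t^{\,i-1}$ prevent cancellation so that leading terms dictate all signs. The upper bound, by contrast, is the short independence argument. I would close by noting that the common value $\dim(L)$ of the maximum chain length is manifestly determined by the set $\sign(L)$ alone, which yields $\dim(K)=\dim(L)$.
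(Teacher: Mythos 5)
Your proof is correct, but it takes a genuinely different route from the paper's. The paper argues directly and symmetrically: take the reduced column echelon form of a matrix whose columns form a basis of $K$; the sign vectors of its columns lie in $\sign(K)=\sign(L)$, and any vectors of $L$ realizing these sign vectors are automatically linearly independent (each such sign vector is nonzero in its pivot coordinate, where all the others vanish), so $\dim(L)\geq\dim(K)$; reversing the roles of $K$ and $L$ finishes. You instead isolate an intrinsic invariant of the set $\sign(L)$ --- the maximum number of strict steps in a chain under the coordinatewise order $\preceq$ --- and prove it equals $\dim(L)$: the upper bound by a triangular independence argument on strictly nested supports, the lower bound by the perturbation construction with $t$-weighted partial sums of a support-increasing echelon basis. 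Both proofs rest on the same two ingredients (echelon-form bases and triangular independence), but the paper never needs anything like your lower-bound construction, because it transfers concrete sign vectors from $K$ into $L$ rather than computing an invariant; this makes it a three-sentence argument. What your approach buys is an explicit combinatorial characterization of $\dim(L)$ in terms of the poset $(\sign(L),\preceq)$ alone (longest chain length), a slightly stronger statement of independent interest, close in spirit to the face-lattice rank function in oriented matroid theory. Note also that your two halves, combined directly, collapse to the paper's structure: the chain you build in $\sign(K)$ lies in $\sign(L)$, and your upper-bound argument shows its realizations in $L$ are independent --- the paper simply uses the echelon columns' sign vectors themselves, making the chain construction unnecessary.
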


Indeed, consider the sign vectors of the columns of the  reduced column echelon form of a matrix whose columns form a basis of $K$. Such sign vectors are in $\sign(K)$ and hence also in $\sign(L)$. It follows that $ \dim(L)\geq \dim(K) $. By reversing the roles of $K$ and $L$, we get the reverse inequality. 

For a  subspace $L\subseteq \mathbb{R}^n$, as usual,  $L^\perp = \{x\in \mathbb{R}^n~|~x^Ty=0\text{ for all }y\in L\}$ denotes  the \emph{orthogonal complement} of $L$.

Two sign vectors $c, x \in \{+,-,0\}^n$ are said to be {\it orthogonal}, written as  $c \perp x $,  if  one of the following two conditions holds:
\begin{enumerate}
\item for each $i$, we have $c_i=0$ or $x_i=0$, or
\item there are indices $i,j$ with $c_i=x_i\not=0$ and $c_j=-x_j\not=0$.
\end{enumerate}
For a set of sign vectors $S\subseteq \{+,-,0\}^n$, the \emph{orthogonal complement} of $S$ is
\begin{equation*}
S^\perp = \{c\in \{+,-,0\}^n~|~c\perp x=0 \text{ for all }x \in S\}.
\end{equation*}
Notice that if $c,x \in \mathbb{R}^n$ and $c^Tx=0$, then $\sign(c)\perp  \sign(x)=0$.

We will use the following theorem, a proof of which can  be found in Ziegler \cite{Zieg}.

\begin{theorem}[Duality of oriented matroids]\label{thm:duality}
For any  subspace $L\subseteq \mathbb{R}^n$, 
$$\sign(L)^\perp=\sign(L^\perp).$$
\end{theorem}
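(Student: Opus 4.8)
The plan is to prove the two inclusions $\sign(L^\perp) \subseteq \sign(L)^\perp$ and $\sign(L)^\perp \subseteq \sign(L^\perp)$ separately. The first is immediate from the remark just before the theorem: given $w = \sign(y)$ with $y \in L^\perp$ and any $v = \sign(x)$ with $x \in L$, we have $x^T y = 0$, so $\sign(y) \perp \sign(x)$, that is $w \perp v$; since $v$ ranges over all of $\sign(L)$, this shows $w \in \sign(L)^\perp$. The whole content of the theorem therefore lies in the reverse inclusion: every sign vector $c$ that is combinatorially orthogonal to $\sign(L)$ must actually be realized as $\sign(y)$ for a genuine vector $y \in L^\perp$.

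For the reverse inclusion, fix $c \in \sign(L)^\perp$ and write $P = \{i : c_i = +\}$, $N = \{i : c_i = -\}$, $Z = \{i : c_i = 0\}$. First I would reduce the realizability of $c$ to a family of one-coordinate-at-a-time feasibility problems: it suffices to produce, for each $i_0 \in P \cup N$, a vector $y^{(i_0)} \in L^\perp$ that is \emph{conformal} to $c$ (meaning $y^{(i_0)}_j = 0$ for $j \in Z$ and $\sign(y^{(i_0)}_j) \in \{0, c_j\}$ for $j \in P \cup N$) and satisfies $y^{(i_0)}_{i_0} \neq 0$. Indeed, the sum $y = \sum_{i_0 \in P \cup N} y^{(i_0)}$ then lies in $L^\perp$, vanishes on $Z$, and on each coordinate of $P \cup N$ is a sum of terms of one fixed sign (so no cancellation occurs), with the summand indexed by $i_0 = j$ guaranteeing $y_j \neq 0$; hence $\sign(y) = c$, as desired.

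The heart of the argument is showing each $y^{(i_0)}$ exists, and this is where I expect the \textbf{main obstacle} to be. I would argue by contradiction using a theorem of the alternative (Farkas / Motzkin transposition). Choosing $A$ whose rows span $L$, so that $L^\perp = \{y : Ay = 0\}$, the existence of a conformal $y \in L^\perp$ with $c_{i_0} y_{i_0} > 0$ is a linear feasibility problem in $y$ subject to $Ay = 0$, $y_j = 0$ for $j \in Z$, and $c_j y_j \geq 0$ for $j \in P \cup N$. If it were infeasible, the transposition theorem would return a dual certificate which simplifies to a vector $\tilde x = A^T \lambda \in L$ with $c_j \tilde x_j \leq 0$ for all $j \in P \cup N$ and $c_{i_0} \tilde x_{i_0} < 0$ (the free multipliers on the $Z$-equations affect only coordinates in $Z$, where $c$ vanishes, so they are irrelevant to the sign comparison). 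Then on the common support of $c$ and $\tilde x$, which lies inside $P \cup N$ and is nonempty since $\tilde x_{i_0} \neq 0$, every product $c_j \tilde x_j$ is strictly negative, so no index satisfies $c_j = \sign(\tilde x_j) \neq 0$. Thus neither defining condition for orthogonality can hold, giving $c \not\perp \sign(\tilde x)$ with $\tilde x \in L$, contradicting $c \in \sign(L)^\perp$. The delicate point throughout is translating the combinatorial orthogonality hypothesis into the precise dual-infeasibility certificate and, conversely, reading the Farkas multipliers back as a sign-vector violation; once the alternative is set up correctly, the sign bookkeeping closes the argument.
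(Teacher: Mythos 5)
Your proof is correct, but there is nothing in the paper to compare it against: the paper does not prove this theorem, it quotes it as a known result (duality of oriented matroids) and cites Ziegler's \emph{Lectures on Polytopes} for a proof. Judged on its own merits, your argument is a sound and essentially the standard self-contained route to the realizable case of this duality. The easy inclusion $\sign(L^\perp)\subseteq\sign(L)^\perp$ does follow immediately from the remark that real orthogonality $c^Tx=0$ implies combinatorial orthogonality of the sign vectors. For the hard inclusion, your conformal-decomposition reduction is right: summing conformal vectors $y^{(i_0)}\in L^\perp$, each nonzero at its own index $i_0\in P\cup N$, produces no cancellation, so the sum realizes $c$ exactly. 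The Motzkin/Farkas step also checks out: with $A$ whose rows span $L$, infeasibility of the system $Ay=0$, $y_j=0$ for $j\in Z$, $c_jy_j\geq 0$ for $j\in P\cup N$, $c_{i_0}y_{i_0}>0$ yields a certificate that collapses to $\tilde x=A^T\lambda\in L$ with $c_{i_0}\tilde x_{i_0}<0$ and $c_j\tilde x_j\leq 0$ on all of $P\cup N$ (the $Z$-multipliers only touch coordinates where $c$ vanishes); such an $\tilde x$ has nonempty common support with $c$ on which every product is strictly negative, so clause (1) of the orthogonality definition fails at $i_0$ and clause (2) fails for lack of any index of agreement, contradicting $c\in\sign(L)^\perp$. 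What your approach buys is self-containedness: the only external ingredient is a theorem of the alternative, whereas the paper defers entirely to the oriented-matroid literature, where the proof is in any case carried out by arguments of this same Farkas-type flavor.
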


A  subspace $L\subseteq \reals^n$ is called \emph{rational} if $L$ has a basis consisting of rational vectors.

\begin{lemma}\label{lem:rationalsign}
Let $L$ be a rational  subspace of $\ \reals^n$ and let $B = \begin{bmatrix} b_1 & b_2 & \ldots & b_k \end{bmatrix}$ be a matrix whose columns form a rational basis for $L$. Then for any $s\in \sign(L)$, there exists a rational vector $x\in \rationals^k$ such that $\sign(Bx) = s$.
\end{lemma}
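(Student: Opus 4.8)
The plan is to translate the requirement $\sign(Bx)=s$ into a system of linear equalities and strict linear inequalities with \emph{rational} coefficients, and then to produce a rational solution by a density argument. Since $s\in\sign(L)$, there is a real vector $y\in L$ with $\sign(y)=s$; because the columns of $B$ form a basis of $L$, we may write $y=Bz$ for some $z\in\reals^k$. Let $r_1^T,\ldots,r_n^T$ denote the rows of $B$. As $B$ is rational, each $r_i\in\rationals^k$. The condition $\sign(Bz)=s$ then says exactly that $r_i^Tz=0$ for every $i$ with $s_i=0$, that $r_i^Tz>0$ for every $i$ with $s_i=+$, and that $r_i^Tz<0$ for every $i$ with $s_i=-$. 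Our goal is to find a rational vector $x\in\rationals^k$ satisfying this same system.

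The key idea is to treat the equality constraints and the strict inequality constraints separately. First I would pass to the subspace $W=\{\,w\in\reals^k : r_i^Tw=0 \text{ for all } i \text{ with } s_i=0\,\}$. This is the kernel of a rational matrix, hence a rational subspace of $\reals^k$, so we may fix a rational basis $v_1,\ldots,v_m$ of $W$. By construction the real witness $z$ lies in $W$. Next, consider the set $U=\{\,w\in W : r_i^Tw>0 \text{ for } s_i=+, \ r_i^Tw<0 \text{ for } s_i=-\,\}$. Each defining condition is strict, hence open, so $U$ is relatively open in $W$, and $U$ is nonempty because $z\in U$.

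Finally I would invoke density of the rationals. Parametrize $W$ by the linear isomorphism $\phi:\reals^m\to W$ given by $\phi(c)=\sum_{i=1}^m c_iv_i$. Then $\phi^{-1}(U)$ is a nonempty open subset of $\reals^m$, so it contains a rational point $c'\in\rationals^m$. Setting $x=\phi(c')=\sum_{i=1}^m c_i' v_i$, we see that $x$ is a rational linear combination of the rational vectors $v_i$, hence $x\in\rationals^k$; moreover $x\in W\cap U$, so $x$ satisfies all the equalities (since $x\in W$) and all the strict inequalities (since $x\in U$). Therefore $\sign(Bx)=s$, as required.

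The only point requiring care — and the reason the argument is organized this way — is that the rational solution must satisfy the strict inequalities \emph{while simultaneously} satisfying the equalities exactly. A naive perturbation of $z$ toward a nearby rational point in $\reals^k$ could destroy the exact equalities $r_i^Tw=0$. Restricting first to the rational subspace $W$ removes this obstacle: every point of $W$ automatically satisfies the equalities, and perturbing within $W$ (through its rational basis) keeps the equalities intact while the open condition $U$ guarantees that a sufficiently nearby rational point still satisfies the strict inequalities.
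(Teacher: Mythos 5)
Your proof is correct and follows essentially the same strategy as the paper's: both restrict to the rational subspace cut out by the equality constraints (your $W$ is the paper's $V$), take a rational basis of it, and then obtain a rational point satisfying the strict inequalities — the paper via a sequence of rational coefficient vectors converging to the real witness, you via openness of the inequality region and density of $\rationals^m$, which is the same argument in topological rather than sequential form.
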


\begin{proof}
Let $N = \{i~:~s_i = 0\}$ and let $V = \{z \in \reals^k~:~ (Bz)_i = 0 \text { for all } i \in N \}$. There exists a vector $y \in V$ such that $\sign(By) = s$. Since $\{b_1,b_2,\ldots,b_k\}$ is a rational basis for $L$, $V$ has a rational basis $\{c_1,c_2,\ldots,c_t\}$. Let $a = \begin{bmatrix} a_1 & a_2 & \ldots & a_t\end{bmatrix}^T \in \reals^t$ such that $a_1c_1 + a_2c_2 + \cdots + a_tc_t = y$. There exists a sequence of rational vectors $a(n) = \begin{bmatrix}  a_1(n) & a_2(n) & \ldots & a_t(n)\end{bmatrix}^T$, $n\in \mathbb{N}$ such that $\lim_{n\to\infty} a(n) = a$. Define $y(n) = a_1(n)c_1 + a_2(n)c_2 + \cdots +a_t(n)c_t$. Then $\lim_{n\to\infty} y(n) = y$ and each $y(n)$ is a rational vector. Since $\lim_{n\to\infty} B y(n) = B y$, there exists an $m$ such that $\sign(B y(m)) = \sign(B y) = s$. By letting $x = y(m)$, the  statement of the lemma is proved.
\end{proof}

\begin{theorem}\label{thm:equivspmlin}
Let $n$ and $r$ be nonnegative integers.
Rational realization of the minimum rank is always possible for all $n\times m$ sign pattern matrices with minimum rank $r$ if and only if for each  subspace $L\subseteq \reals^n$ with dimension $r$ there exists a rational subspace $K\subseteq \reals^n$ with $\dim(K) = r$ and $\sign(K) = \sign(L)$.
\end{theorem}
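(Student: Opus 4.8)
The plan is to translate the statement about minimum ranks entirely into the language of sign vectors of subspaces, and then read off both implications from the resulting dictionary. The basic correspondence is this: if $A$ is an $n\times m$ sign pattern and $B\in Q(A)$ has $\rank(B)=r$, then the columns of $B$ lie in the $r$-dimensional column space $L=\operatorname{col}(B)$, and the columns of $A$, viewed as elements of $\{+,-,0\}^n$, are exactly sign vectors in $\sign(L)$. Conversely, if every column of $A$ belongs to $\sign(L)$ for some subspace $L$ with $\dim L\le r$, then choosing for each column a vector of $L$ with the prescribed sign produces a matrix in $Q(A)$ of rank at most $r$. Hence $\mr(A)\le r$ if and only if the columns of $A$ all lie in $\sign(L)$ for some subspace $L$ of dimension at most $r$, and (enlarging $L$ if necessary) one may take $\dim L = r$. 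Two ingredients drive the proof: Lemma \ref{lem:rationalsign}, which manufactures \emph{rational} vectors of a rational subspace realizing any prescribed sign in its sign-vector set, and the following strengthening of Observation \ref{Obs1}, which I will call Lemma B: \emph{if $\sign(L)\subseteq\sign(K)$ and $\dim L=\dim K$, then $\sign(L)=\sign(K)$.}

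For the direction $(\Leftarrow)$, assume the subspace condition holds and let $A$ be any $n\times m$ sign pattern with $\mr(A)=r$. Pick $B\in Q(A)$ with $\rank(B)=r$ and set $L=\operatorname{col}(B)$, so $\dim L=r$ and every column of $A$ lies in $\sign(L)$. By hypothesis there is a rational subspace $K$ with $\dim K=r$ and $\sign(K)=\sign(L)$; thus every column of $A$ also lies in $\sign(K)$. Fixing a rational basis matrix $B_K$ of $K$ and applying Lemma \ref{lem:rationalsign} to each column sign of $A$, I obtain for each column a rational vector of $K$ with exactly that sign. Assembling these vectors as columns yields a rational matrix $C\in Q(A)$ whose columns lie in $K$, so $\rank(C)\le\dim K=r$; since $\mr(A)=r$, in fact $\rank(C)=r$, giving a rational realization of the minimum rank.

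For the direction $(\Rightarrow)$, assume rational realization always succeeds for minimum rank $r$, and let $L\subseteq\reals^n$ be an arbitrary subspace with $\dim L=r$. Form the sign pattern $A$ whose columns are precisely the (finitely many) elements of $\sign(L)$. Then $\mr(A)\le r$ because $L$ realizes it; and $\mr(A)=r$, for if some $B\in Q(A)$ had rank $r'<r$ with $M=\operatorname{col}(B)$, then $\sign(L)\subseteq\sign(M)$, whence the inequality form of Observation \ref{Obs1} gives $r=\dim L\le\dim M=r'$, a contradiction. By hypothesis there is a rational $B\in Q(A)$ with $\rank(B)=r$; put $K=\operatorname{col}(B)$, a rational subspace with $\dim K=r$. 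Since the columns of $A$ exhaust $\sign(L)$ and lie in $K$, we have $\sign(L)\subseteq\sign(K)$, and Lemma B upgrades this to $\sign(K)=\sign(L)$, completing the equivalence.

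The main obstacle is Lemma B, the equal-dimension rigidity of sign-vector containment; everything else is bookkeeping around the dictionary and Lemma \ref{lem:rationalsign}. Observation \ref{Obs1} only yields the inequality $\dim L\le\dim K$ from $\sign(L)\subseteq\sign(K)$, and I expect the refinement to equality to require genuine oriented-matroid input rather than the reduced-echelon argument alone: matching the sign of each individual coordinate is not enough, since two subspaces can share every coordinatewise sign yet have distinct sign-vector sets because the \emph{interaction} among coordinates differs. My plan for Lemma B is to exploit Theorem \ref{thm:duality}: the inclusion $\sign(L)\subseteq\sign(K)$ dualizes to $\sign(K^\perp)\subseteq\sign(L^\perp)$ with $\dim K^\perp=\dim L^\perp=n-r$, so the situation is symmetric under orthogonal complementation. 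I would then try to show that the support-minimal covectors (cocircuits) of $K$ already lie in $\sign(L)$ and invoke closure of sign-vector sets under composition, every covector being a composition of cocircuits, to force $\sign(K)\subseteq\sign(L)$; the delicate point, and the part I expect to fight with, is controlling these minimal covectors under the dimension constraint. Alternatively, this is precisely the standard fact that a realizable oriented matroid is determined by its covectors and that covector containment between oriented matroids of equal rank is an equality, which may be cited directly.
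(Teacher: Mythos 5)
Your argument follows the paper's proof essentially step for step: in the backward direction you factor a rank-$r$ realization, replace its column space $L$ by the hypothesized rational subspace $K$, and rebuild a rational matrix in $Q(A)$ column by column via Lemma~\ref{lem:rationalsign}; in the forward direction you form the pattern whose columns are all of $\sign(L)$, invoke the rational-realization hypothesis, and take the column space of the resulting rational matrix. These are exactly the paper's constructions. Your forward direction is in fact slightly more careful than the paper's: you verify $\mr(A)=r$ (not merely $\mr(A)\le r$) before applying the hypothesis --- which is necessary, since the hypothesis concerns patterns of minimum rank exactly $r$ --- using the inequality form of Observation~\ref{Obs1}.

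The noteworthy point is your Lemma B. You are right that the forward direction needs the rigidity statement: if $\sign(L)\subseteq\sign(K)$ and $\dim L=\dim K$, then $\sign(L)=\sign(K)$. Be aware that the paper does not prove this either; it simply writes ``Then $\sign(K)=\sign(L)$,'' although its construction directly yields only the containment $\sign(L)\subseteq\sign(K)$. So you have correctly isolated the one step of genuine content that the paper glosses over, and your judgment that Observation~\ref{Obs1} alone cannot supply it is sound. The statement is true, and your proposed routes can be completed; here is one concrete completion along your duality plan. For $Z\subseteq\{1,\dots,n\}$ the containment passes to $L_Z=\{x\in L:\ x_i=0 \text{ for all } i\in Z\}$ and $K_Z$ (a vector of $K$ realizing a sign vector that vanishes on $Z$ must itself vanish on $Z$), so by the inequality form of Observation~\ref{Obs1} we get $\dim L_Z\le\dim K_Z$ for every $Z$; equivalently, the rank function of the matroid of coordinate functionals on $L$ dominates that of $K$, with equality at $Z=\{1,\dots,n\}$. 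If the two matroids differed, a minimal witness $Z$ would be independent for $L$ yet contain a circuit of $K$, i.e.\ the support of a nonzero vector of $K^\perp$; but Theorem~\ref{thm:duality} turns $\sign(L)\subseteq\sign(K)$ into $\sign(K^\perp)\subseteq\sign(L^\perp)$, producing a nonzero vector of $L^\perp$ with that same support and contradicting independence. With the underlying matroids equal, the coordinate hyperplane arrangements inside $L$ and $K$ have isomorphic intersection lattices, so Zaslavsky-type face counting gives $|\sign(L)|=|\sign(K)|$, and containment of finite sets of equal cardinality is equality. Whether you argue this way or cite the corresponding oriented-matroid fact, as you suggest, this supplement is needed to make your proof --- and the paper's own --- complete.
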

\begin{proof}
Suppose that rational realization of the minimum rank is always possible for all $n\times m$ sign pattern matrices with minimum rank $r$. Let $L\subseteq \reals^n$ with dimension $r$.
Let $s_1,\ldots,s_k$ be  all sign vectors in $\sign(L)$. Let $A$ be the $n\times k$ sign pattern matrix whose $i$th column is $s_i$. Then $\mr(A) \leq r$, as for each sign vector $s_i$ we can choose a vector $x_i\in L$ with $\sign(x_i)=s_i$, and the matrix $B$, whose $i$th column is $x_i$, has rank $r$. Thus, there exists a rational matrix $C\in Q(A)$ with rank at most $r$. Let $K$ be the column space of $C$. Then $\sign(K) = \sign(L)$ and $K$ is rational. Since $\sign(K) = \sign(L)$, by Observation \ref{Obs1}, $\dim(K) = r$.

Conversely, suppose that for each  subspace $L\subseteq \reals^n$ with dimension $r$ there exists a rational subspace $K\subseteq \reals^n$ with $\sign(K) = \sign(L)$. Let $A$ be an $n\times m$ sign pattern matrix with $\mr(A) = r$ and let $F\in Q(A)$ with $\rank(F)=r$. 
There exist an $n\times r$ matrix $U$ and an $r\times m$ matrix $V$ such that $F=U V$.
Let $v_i$ denote the $i$th column of $V$. Notice that $\sign(U v_i)$ is the $i$th column of $A$.
Let $L$ be the  subspace of $\reals^n$ spanned by the columns of $U$. By assumption, there exists a rational  subspace $K$ of $\reals^n$ such that $\sign(L) = \sign(K)$. Let $\{b_1,b_2,\ldots,b_r\}$ be a rational basis for $K$ and let $B = \begin{bmatrix} b_1 & b_2 & \ldots & b_r \end{bmatrix}$. Since $\sign(L) = \sign(K)$, there exist rational vectors $x_i \in \rationals^r$, $i=1,2,\ldots,m$ such that $\sign(B x_i) = \sign(U v_i)$, by Lemma~\ref{lem:rationalsign}. Let $C = B \begin{bmatrix} x_1 & x_2 & \ldots & x_m\end{bmatrix}$. Then $C \in Q(A)$, $\rank C\leq r$, and $C$ is a rational matrix. Since $\mr(A) = r$, $\rank C = r$. 
\end{proof}

The next theorem can be found in \cite{A1} and \cite{Li13}. 

\begin{theorem}\label{thm:rat2}
Rational realization of the minimum rank is always possible for every $n \times m$ sign pattern matrix $A$ with minimum rank $2$.
\end{theorem}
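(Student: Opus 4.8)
The plan is to deduce this from the subspace reformulation in Theorem~\ref{thm:equivspmlin}. Taking $r=2$ there, it suffices to show that every $2$-dimensional subspace $L\subseteq\reals^n$ admits a rational $2$-dimensional subspace $K\subseteq\reals^n$ with $\sign(K)=\sign(L)$; by Observation~\ref{Obs1} the equality of the sign vector sets forces $\dim(K)=2$ automatically, so only $\sign(K)=\sign(L)$ needs to be arranged.

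To understand $\sign(L)$, I would fix a basis $u,v$ of $L$ and record, for each coordinate $i$, the point $q_i=(u_i,v_i)\in\reals^2$. For a direction $(\alpha,\beta)\in\reals^2$ the $i$th entry of $\alpha u+\beta v$ is the inner product $\langle(\alpha,\beta),q_i\rangle$, so its sign records on which side of the line through the origin orthogonal to $q_i$ the direction $(\alpha,\beta)$ lies, the entry being $0$ exactly on that line. Writing $(\alpha,\beta)=(\cos\phi,\sin\phi)$ and letting $\phi$ sweep once around the circle, the whole vector $\sign(\alpha u+\beta v)$ is piecewise constant and changes only as $\phi$ crosses the finitely many angles orthogonal to the nonzero $q_i$. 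Consequently $\sign(L)$ is completely determined by the cyclic arrangement on the circle of the rays spanned by the nonzero $q_i$: which coordinates give the zero point, which give the same ray, which give antipodal rays, and the circular order of the distinct rays.

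The rationalization step is then to reproduce this purely combinatorial datum with rational points. I would group the nonzero $q_i$ by the (undirected) line through the origin that they span, and list these finitely many lines in their cyclic order in $[0,\pi)$. Since rational directions are dense on the circle, I can choose a rational line for each of them preserving this cyclic order, and then send each ray of an original line to the corresponding ray of its rational replacement, whose antipode is rational as well. Setting $q_i'$ to be the chosen rational vector on the appropriate ray when $q_i\neq 0$, and $q_i'=0$ otherwise, and letting $u',v'$ be the vectors with entries $(u_i',v_i')=q_i'$, I would take $K=\mathrm{span}(u',v')$. Because the number of distinct lines is preserved, the $q_i'$ still span $\reals^2$, so $\dim K=2$; because the cyclic order and all coincidence and antipodal relations among the rays are preserved, the sweep of sign vectors for $K$ matches that for $L$ arc by arc, giving $\sign(K)=\sign(L)$.

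The step I expect to be delicate is the rationalization itself, precisely because a generic small perturbation of $u,v$ will not do: breaking a coincidence (two parallel columns becoming non-parallel, or a structural zero being destroyed) enlarges the set of sign vectors, so $\sign(K)\neq\sign(L)$. The perturbation must therefore be carried out at the level of lines rather than individual columns, keeping collinear columns collinear and antipodal columns antipodal, which is exactly what handling undirected lines and taking (automatically rational) antipodes accomplishes. One must also confirm the easy boundary facts, namely that a degenerate coordinate with $q_i=0$ contributes a fixed zero entry throughout and that coincident rays are matched to a single rational ray, so that the region-by-region comparison of sign vectors is exact. An alternative route is to observe that $\sign(L)$ is the set of covectors of a rank-$2$ oriented matroid and to invoke the rational realizability of rank-$2$ oriented matroids, but the direct circular-order construction above is self-contained given Theorem~\ref{thm:equivspmlin}.
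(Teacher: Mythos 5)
Your overall strategy is legitimate and genuinely different from the paper's treatment: the paper does not prove this theorem at all, but cites \cite{A1} and \cite{Li13}, where the result is obtained from the structural theory of minimum-rank-$2$ sign patterns (in \cite{Li13} via the condensed-form characterization, cf.\ Theorem~\ref{thm:mr2char2}, even with integer entries bounded by $2n-3$); the paper then \emph{derives} the subspace statement (Theorem~\ref{thm:dim2rat}) from the present theorem via Theorem~\ref{thm:equivspmlin}. You invert that logic: prove the subspace statement directly by a circular-sweep argument and then deduce the theorem from the ``if'' direction of Theorem~\ref{thm:equivspmlin}. This inversion is non-circular, since Theorem~\ref{thm:equivspmlin} is proved independently, and if completed it would make the whole chain (including Theorem~\ref{thm:dim2rat}) self-contained rather than resting on a citation.

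However, there is a genuine gap in your key claim that $\sign(L)$ is completely determined by which coordinates give the zero point, which give the same ray, which give antipodal rays, and the circular order of the distinct rays. That invariant is insufficient. Counterexample: let $q_1,q_2,q_3$ have angles $0$, $0.1$, $\pi+0.2$, and let $q_1',q_2',q_3'$ have angles $0$, $\pi+0.1$, $\pi+0.2$. Both configurations have no zero coordinates, no coincident rays, no antipodal pairs, and the same cyclic order $(q_1,q_2,q_3)$ of the occupied rays; yet $(+,+,-)$ is a sign vector of the first configuration (take $\phi=0.1$) and one checks it is not a sign vector of the second. The reason is that the arcs of constant sign are cut out by the points $\theta_i\pm\pi/2$, which see \emph{both} rays of every line, occupied or not; the correct invariant is the labeled cyclic arrangement of all $2k$ line--circle intersection points together with its antipodal involution (equivalently, the rank-$2$ oriented matroid of $\{q_i\}$), not merely the occupied rays. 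Concretely, your phrase ``send each ray of an original line to the corresponding ray of its rational replacement'' hides the essential choice: the ray correspondence must be globally consistent across lines (e.g., for every line, upper-half-plane ray goes to upper-half-plane ray). Flipping it for a single line preserves every invariant you list but changes the sign set, exactly as in the counterexample, so your arc-by-arc verification as written would certify incorrect rationalizations. With the consistent choice the construction is correct, and the verification can be completed by noting that for lines sorted as $\beta_1<\cdots<\beta_k$ in $[0,\pi)$ the attainable vectors $(f_1(\phi),\ldots,f_k(\phi))$ are exactly the patterns consisting of a prefix of $-$'s and a suffix of $+$'s, possibly with one $0$ at the interface, together with their negatives; this description depends only on the sorted order, and $\sign(L)$ is its image under the fixed map recording each coordinate's line, ray sign, and zero status --- all data your (repaired) construction preserves.
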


The following theorem follows from the two preceding theorems. 

\begin{theorem}\label{thm:dim2rat}
For any  subspace $L\subseteq \reals^n$ with $\dim(L)=2$, there exists a rational subspace $K\subseteq \reals^n$ with $\dim(K)=2$ and $\sign(K)=\sign(L)$.
\end{theorem}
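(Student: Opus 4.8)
The plan is to read this statement as the precise specialization of the equivalence in Theorem~\ref{thm:equivspmlin} to the case $r=2$, and then feed in Theorem~\ref{thm:rat2} as the concrete input that activates that equivalence. In other words, I would not attempt to build the rational subspace $K$ by hand at all; instead I would observe that all of the constructive work has already been done inside the two preceding theorems, and that the present claim is exactly the output of combining them.

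Concretely, I would first invoke Theorem~\ref{thm:rat2}, which asserts that rational realization of the minimum rank is always possible for every $n\times m$ sign pattern matrix with minimum rank $2$. This is word-for-word the left-hand condition of the biconditional in Theorem~\ref{thm:equivspmlin} when one sets $r=2$. I would then apply the forward direction of that biconditional, which transfers the sign-pattern statement into the subspace statement: for every subspace $L\subseteq\reals^n$ with $\dim(L)=2$ there exists a rational subspace $K\subseteq\reals^n$ with $\dim(K)=2$ and $\sign(K)=\sign(L)$. This is precisely the assertion of Theorem~\ref{thm:dim2rat}, so the deduction is complete in one step.

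The only points requiring care are bookkeeping rather than genuine obstacles. First, one must check that the hypotheses line up, namely that the integer $r$ in Theorem~\ref{thm:equivspmlin} is taken to be $2$ and that $n$ is arbitrary; this is immediate. Second, one should note that the dimension condition $\dim(K)=2$ appearing in the conclusion is already guaranteed by the statement of Theorem~\ref{thm:equivspmlin} (and ultimately by Observation~\ref{Obs1}, since $\sign(K)=\sign(L)$ forces $\dim(K)=\dim(L)=2$), so no separate verification of the dimension is needed here.

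In a from-scratch argument the hard part would be the direct construction of a rational $2$-dimensional subspace with a prescribed set of sign vectors, together with the perturbation and limiting argument used in Lemma~\ref{lem:rationalsign}; but since that difficulty is entirely absorbed into Theorems~\ref{thm:equivspmlin} and~\ref{thm:rat2}, the present proof introduces no new technical content and reduces to correctly quoting and specializing those results.
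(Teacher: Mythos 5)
Your proposal is correct and coincides with the paper's own derivation: the paper states that Theorem~\ref{thm:dim2rat} ``follows from the two preceding theorems,'' i.e.\ it is obtained exactly as you describe, by specializing Theorem~\ref{thm:equivspmlin} to $r=2$ and invoking Theorem~\ref{thm:rat2} to activate the forward direction of the equivalence. Your additional remark that the dimension condition $\dim(K)=2$ is already built into Theorem~\ref{thm:equivspmlin} (via Observation~\ref{Obs1}) is accurate bookkeeping and matches the paper's intent.
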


Since the solution space of a system of homogeneous linear equations with rational coefficients has a rational basis, it is clear that if $L\subseteq \reals^n$ is a rational  subspace, then $L^\perp$ is also a rational subspace. 

\begin{lemma}\label{lem:rationaln-2}
For any subspace $L\subseteq \reals^n$ with $\dim L = n-2$, there exists a rational subspace $K\subseteq \reals^n$ with $\dim K = n-2$ such that $\sign(K) = \sign(L)$.
\end{lemma}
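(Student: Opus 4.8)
The plan is to reduce everything to the two-dimensional case via orthogonal duality. The key observation is that the orthogonal complement of an $(n-2)$-dimensional subspace is $2$-dimensional, and Theorem~\ref{thm:dim2rat} has already settled dimension $2$ completely; so the present lemma should be nothing more than the ``dual'' of that result.

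First I would pass to $L^\perp$, which has $\dim(L^\perp) = n-(n-2) = 2$. Applying Theorem~\ref{thm:dim2rat} to $L^\perp$ produces a rational subspace $M\subseteq\reals^n$ with $\dim(M)=2$ and $\sign(M)=\sign(L^\perp)$. Next I would set $K = M^\perp$. Since $M$ is rational, its orthogonal complement $K$ is rational as well (this is exactly the remark preceding the lemma: the solution space of a homogeneous linear system with rational coefficients has a rational basis), and $\dim(K) = n-2$.

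It then remains to verify that $\sign(K) = \sign(L)$, and this is a short chain of applications of the duality theorem. By Theorem~\ref{thm:duality} applied to $M$, and then using $\sign(M)=\sign(L^\perp)$,
$$\sign(K) = \sign(M^\perp) = \sign(M)^\perp = \sign(L^\perp)^\perp.$$
Applying Theorem~\ref{thm:duality} a second time, now to the subspace $L^\perp$, gives $\sign(L^\perp)^\perp = \sign\bigl((L^\perp)^\perp\bigr) = \sign(L)$, since $(L^\perp)^\perp = L$. Combining these equalities yields $\sign(K)=\sign(L)$, and Observation~\ref{Obs1} confirms (redundantly here) that the dimensions match.

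I do not expect any genuine obstacle: all of the real difficulty has been absorbed into Theorem~\ref{thm:dim2rat} (equivalently, into the rational realizability of minimum rank $2$), and this lemma is simply its dual statement. The only point demanding care is invoking the duality theorem in the right direction—specifically, recognizing that $\sign(L^\perp)^\perp$ \emph{equals} $\sign(L)$ rather than merely relating to it by an inclusion—but this is immediate from the involution $(L^\perp)^\perp = L$ together with oriented-matroid duality, so no additional combinatorial work is needed.
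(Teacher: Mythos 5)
Your proposal is correct and follows exactly the paper's own argument: apply Theorem~\ref{thm:dim2rat} to $L^\perp$ to get a rational $2$-dimensional $M$ with $\sign(M)=\sign(L^\perp)$, set $K=M^\perp$, and conclude $\sign(K)=\sign(M)^\perp=\sign(L^\perp)^\perp=\sign(L)$ via Theorem~\ref{thm:duality}. The only difference is that you spell out the second application of duality (through $(L^\perp)^\perp=L$), which the paper leaves implicit.
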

\begin{proof}
By Theorem~\ref{thm:dim2rat}, there exists a rational subspace $M\subseteq \reals^n$ with $\dim M = 2$ such that $\sign(L^\perp) = \sign(M)$. Let $K = M^\perp$. Then $K$ is a rational subspace of $\reals^n$ with $\dim K = n-2$ and, by Theorem~\ref{thm:duality}, $\sign(K) = \sign(M^\perp) = \sign(M)^\perp = \sign(L^\perp)^\perp = \sign(L)$.
\end{proof}

By Theorem~\ref{thm:equivspmlin}, and by considering the transpose if needed, we get the following result. 

\begin{theorem} \label{thm:mr=n-2}
Rational realization of the minimum rank is always possible for $n \times m$ sign pattern matrices with minimum rank $n-2$ or $m-2$.
\end{theorem}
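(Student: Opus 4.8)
The plan is to combine the subspace-level statement of Lemma~\ref{lem:rationaln-2} with the equivalence in Theorem~\ref{thm:equivspmlin}, and then dispose of the $m-2$ case by transposition. First I would specialize Theorem~\ref{thm:equivspmlin} to $r = n-2$. That theorem asserts that rational realization of the minimum rank is always possible for all $n\times m$ sign patterns of minimum rank $r$ \emph{precisely} when every $r$-dimensional subspace $L\subseteq\reals^n$ admits a rational subspace $K\subseteq\reals^n$ with $\dim K = r$ and $\sign(K)=\sign(L)$. But Lemma~\ref{lem:rationaln-2} produces exactly such a $K$ for every $(n-2)$-dimensional subspace $L$. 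Hence the hypothesis of Theorem~\ref{thm:equivspmlin} is satisfied for $r=n-2$, and we immediately conclude that rational realization is always possible for $n\times m$ sign patterns with minimum rank $n-2$.

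For the $m-2$ case I would pass to the transpose. Let $A$ be $n\times m$ with $\mr(A)=m-2$. Since rank is invariant under transposition and $Q(A^T)=\{\,B^T : B\in Q(A)\,\}$, we have $\mr(A^T)=\mr(A)=m-2$, where $A^T$ is $m\times n$. Applying the first part to $A^T$ — for which the leading dimension is $m$ and the minimum rank $m-2$ is that dimension minus $2$ — yields a rational matrix $C\in Q(A^T)$ with $\rank C = m-2$. Then $C^T\in Q(A)$ is a rational realization of $\mr(A)$, as required.

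I do not expect a genuine obstacle here, since the substantive work has already been carried out in Lemma~\ref{lem:rationaln-2}, which leans on the duality of oriented matroids (Theorem~\ref{thm:duality}) together with the dimension-$2$ result (Theorem~\ref{thm:dim2rat}). The only points demanding care are bookkeeping ones: confirming that transposition preserves both the sign-pattern class and the minimum rank, and matching the index conventions so that ``minimum rank $m-2$ for an $n\times m$ pattern'' lines up with ``minimum rank equal to the leading dimension minus $2$'' after the transpose is taken. Once those routine checks are in place, the theorem follows at once.
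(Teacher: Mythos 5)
Your proposal is correct and is exactly the paper's argument: the paper derives this theorem in one line by combining Lemma~\ref{lem:rationaln-2} with the equivalence in Theorem~\ref{thm:equivspmlin}, ``considering the transpose if needed.'' Your write-up merely makes explicit the routine transposition bookkeeping that the paper leaves implicit.
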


From the $9\times 9$ example given in \cite{Jing13}, the next theorem follows immediately.

\begin{theorem}
For each integer $n\geq 9$, there exists an $n\times m$ sign pattern matrix $A$ with $\mr(A)=3$ for which no rational realization is possible. 
\end{theorem}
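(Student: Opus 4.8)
The plan is to leverage the $9 \times 9$ sign pattern exhibited by Jing et al.\ \cite{Jing13} — call it $A_0$ — which satisfies $\mr(A_0) = 3$ while admitting no rational matrix of rank $3$ in $Q(A_0)$ (its rational minimum rank being $4$). For a given $n \geq 9$ I would build an $n \times m$ pattern by padding $A_0$ with extra rows, and then argue that neither of the two defining properties — minimum rank exactly $3$, and the nonexistence of a rational rank-$3$ realization — is destroyed by the padding. Concretely, I would take $m = 9$ and let $A$ be the $n \times 9$ sign pattern obtained from $A_0$ by appending $n - 9$ additional rows, each a verbatim copy of (say) the first row of $A_0$; for $n = 9$ this is just $A_0$ itself.

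First I would verify $\mr(A) = 3$. For the upper bound, I would start from any real $B_0 \in Q(A_0)$ with $\rank(B_0) = 3$ and duplicate its first row the requisite number of times; the resulting matrix $B$ lies in $Q(A)$, and since repeating an existing row changes neither the row space nor the rank, $\rank(B) = 3$, so $\mr(A) \leq 3$. For the lower bound I would use that $A_0$ occurs as a (row) submatrix of $A$ together with the monotonicity of minimum rank under passing to submatrices: if $C$ realizes $\mr(A)$, then the submatrix of $C$ in the rows forming $A_0$ lies in $Q(A_0)$ and has rank at most $\rank(C)$, whence $\mr(A_0) \leq \mr(A)$; thus $\mr(A) \geq 3$, and the two bounds give $\mr(A) = 3$.

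Finally I would rule out rational realization. Suppose for contradiction that some rational $C \in Q(A)$ has $\rank(C) = 3$. Deleting the $n - 9$ duplicate rows yields a rational matrix $C'$ whose sign pattern is exactly $A_0$, so $C' \in Q(A_0)$, and $\rank(C') \leq \rank(C) = 3$. Since $\mr(A_0) = 3$, in fact $\rank(C') = 3$, so $C'$ would be a rational realization of the minimum rank of $A_0$ — contradicting the defining property of the Jing et al.\ example. Hence no rational rank-$3$ realization of $A$ exists.

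As for difficulty: there is essentially no obstacle here, which is exactly why the paper states the result \emph{follows immediately}. The only points demanding a little care are the two structural facts I am relying on — that duplicating a row preserves the rank (yielding the upper bound and the real realization) and that minimum rank does not increase when one passes to a submatrix (yielding the lower bound) — both of which are standard. The conceptual content lives entirely in the $9 \times 9$ example of \cite{Jing13}; the argument above merely transports that single obstruction up to every larger number of rows by an inflation that leaves the qualitative class, and hence the realizability question, unchanged on the original coordinates.
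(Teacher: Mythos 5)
Your proposal is correct and matches the paper's approach: the paper simply states that the theorem ``follows immediately'' from the $9\times 9$ example of Jing et al., and your row-padding construction, with the rank and rationality arguments for why padding preserves both $\mr(A)=3$ and the nonexistence of a rational realization, is precisely the routine verification being left implicit. No gaps; the submatrix-monotonicity and row-duplication facts you invoke are standard and applied correctly.
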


From Theorem~\ref{thm:equivspmlin} we then obtain the following corollary.

\begin{corollary}\label{cor:dim3}
Let $n\geq 9$ be an integer. Then there exists a  subspace $L\subseteq \reals^n$ with $\dim L = 3$ such that there is no rational  subspace $K\subseteq \reals^n$ with $\dim K = 3$ and $\sign(L)=\sign(K)$.
\end{corollary}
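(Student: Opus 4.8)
The plan is to obtain this corollary as a direct logical consequence of the theorem immediately preceding it, combined with the biconditional established in Theorem~\ref{thm:equivspmlin}. No new analytic work is needed; the substantive content has already been packaged into the $9\times 9$ example of \cite{Jing13} and into the duality machinery underlying Theorem~\ref{thm:equivspmlin}.

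First I would fix an integer $n\geq 9$ and set $r=3$. The theorem just stated (obtained from the $9\times 9$ example of \cite{Jing13}) guarantees an $n\times m$ sign pattern matrix $A$ with $\mr(A)=3$ whose minimum rank admits no rational realization. Hence the assertion ``rational realization of the minimum rank is always possible for all $n\times m$ sign pattern matrices with minimum rank $3$'' is \emph{false} for this $n$. I would then apply Theorem~\ref{thm:equivspmlin} with these values of $n$ and $r=3$: because that theorem is an ``if and only if'' statement, the failure of its left-hand side forces the failure of its right-hand side.

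Finally I would spell out the negation of that right-hand side. Denying ``for each subspace $L\subseteq\reals^n$ with $\dim L=3$ there exists a rational subspace $K\subseteq\reals^n$ with $\dim K=3$ and $\sign(K)=\sign(L)$'' yields exactly the claimed statement: there exists a subspace $L\subseteq\reals^n$ with $\dim L=3$ for which no rational subspace $K\subseteq\reals^n$ with $\dim K=3$ satisfies $\sign(L)=\sign(K)$. The only point that calls for any care is the quantifier bookkeeping in this negation; it is worth remarking that the dimension requirement on $K$ is in fact automatic, since $\sign(K)=\sign(L)$ already forces $\dim K=\dim L=3$ by Observation~\ref{Obs1}, so the negation goes through cleanly. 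There is no genuine obstacle here, as the corollary is essentially the contrapositive reformulation of the preceding theorem through the subspace--sign-pattern dictionary of Theorem~\ref{thm:equivspmlin}.
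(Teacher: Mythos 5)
Your proposal is correct and is exactly the paper's route: the paper derives the corollary by applying the biconditional of Theorem~\ref{thm:equivspmlin} (with $r=3$) to the preceding theorem's nonrealizable $n\times m$ pattern with minimum rank $3$, and negating the right-hand side. Your added remark that $\dim K = 3$ is automatic from $\sign(K)=\sign(L)$ via Observation~\ref{Obs1} is a correct (and harmless) refinement of the quantifier bookkeeping.
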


The following result then follows from Theorem~\ref{thm:duality}. 

\begin{lemma}\label{lem:rationaln-3}
Let $n\geq 9$ be an integer. 
There exists a  subspace $M\subseteq \reals^n$ with $\dim M = n-3$ such that there is no rational  subspace $K\subseteq \reals^n$ with $\dim K = n-3$ and $\sign(K) = \sign(M)$.
\end{lemma}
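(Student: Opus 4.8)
The plan is to let $M = L^\perp$, where $L\subseteq\reals^n$ is the dimension-$3$ subspace furnished by Corollary~\ref{cor:dim3}, i.e.\ a subspace whose set of sign vectors admits no rational realization in dimension $3$. Since $\dim M = n - \dim L = n-3$, this $M$ has the correct dimension, and it remains only to show that $\sign(M)$ is not the set of sign vectors of any rational subspace of dimension $n-3$. I would argue by contradiction: suppose there is a rational subspace $K\subseteq\reals^n$ with $\dim K = n-3$ and $\sign(K) = \sign(M) = \sign(L^\perp)$. The idea is to pass to orthogonal complements, which sends dimension $n-3$ back to dimension $3$, thereby manufacturing a rational realization of $\sign(L)$ and contradicting Corollary~\ref{cor:dim3}.

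Concretely, the key steps are as follows. First, recall that the orthogonal complement of a rational subspace is rational (as noted just before Lemma~\ref{lem:rationaln-2}), so $K^\perp$ is rational; moreover $\dim K^\perp = n - (n-3) = 3$, which one may also read off from Observation~\ref{Obs1} via $\dim K = \dim M$. Second, apply Theorem~\ref{thm:duality} to $K$ to obtain
\[
\sign(K^\perp) = \sign(K)^\perp = \sign(L^\perp)^\perp .
\]
Third, apply Theorem~\ref{thm:duality} once more, now to $L^\perp$, using $(L^\perp)^\perp = L$, to get $\sign(L^\perp)^\perp = \sign\big((L^\perp)^\perp\big) = \sign(L)$. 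Chaining these equalities yields $\sign(K^\perp) = \sign(L)$, so $K^\perp$ is a rational subspace of dimension $3$ whose set of sign vectors is exactly $\sign(L)$ — precisely what Corollary~\ref{cor:dim3} rules out. This contradiction establishes the lemma.

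I do not expect a genuine obstacle here, since the argument is essentially a duality-and-dimension bookkeeping exercise built on top of Corollary~\ref{cor:dim3}. The only points demanding care are applying Theorem~\ref{thm:duality} in the correct direction at each of the two invocations (in particular, not conflating $\sign(L)^\perp$ with $\sign(L^\perp)$, which are equal exactly by that theorem), and confirming the dimension count, for which Observation~\ref{Obs1} provides a clean shortcut. The substantive mathematical content has already been absorbed into Corollary~\ref{cor:dim3}, so this lemma is its dual counterpart.
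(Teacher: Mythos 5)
Your proposal is correct and follows essentially the same route as the paper: take $M = L^\perp$ for the subspace $L$ from Corollary~\ref{cor:dim3}, and argue via Theorem~\ref{thm:duality} that any rational $(n-3)$-dimensional $K$ with $\sign(K)=\sign(M)$ would make $K^\perp$ a rational $3$-dimensional subspace with $\sign(K^\perp)=\sign(L)$, a contradiction. The paper's own proof leaves this duality step implicit; you have simply written it out in full, which is a faithful expansion rather than a different argument.
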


\begin{proof}
By Corollary~\ref{cor:dim3}, there exists a
 subspace $L\subseteq \reals^n$ with $\dim L = 3$ such that there is no rational  subspace $K\subseteq \reals^n$ with $\dim K = 3$ and $\sign(L)=\sign(K)$. Let $M = L^\perp$. Then there is no rational  subspace $K\subseteq \reals^n$ with $\dim K = n-3$ and $\sign(K) = \sign(M)$.
\end{proof}

Another application of Theorem~\ref{thm:equivspmlin} gives the following fact. 

\begin{theorem}
For each integer $n\geq 9$, there exists a nonnegative integer $m$ such that there exists an $n\times m$ sign pattern matrix with minimum rank $n-3$ for which rational realization is not possible.
\end{theorem}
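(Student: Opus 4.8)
The plan is to read the desired statement as precisely the failure of the hypothesis of Theorem~\ref{thm:equivspmlin} at $r=n-3$, and then let Lemma~\ref{lem:rationaln-3} certify that failure. Concretely, the assertion ``there exists a nonnegative integer $m$ and an $n\times m$ sign pattern with minimum rank $n-3$ for which rational realization is not possible'' is exactly the negation of ``rational realization of the minimum rank is always possible for all $n\times m$ sign pattern matrices with minimum rank $n-3$.'' By Theorem~\ref{thm:equivspmlin} (applied with $r=n-3$), this latter statement is equivalent to the subspace condition: every $(n-3)$-dimensional subspace $L\subseteq\reals^n$ admits a rational subspace $K\subseteq\reals^n$ with $\dim K=n-3$ and $\sign(K)=\sign(L)$. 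Thus it suffices to exhibit a single $(n-3)$-dimensional subspace for which no such rational $K$ exists.

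First I would set $r=n-3$ and invoke Lemma~\ref{lem:rationaln-3}, which (using $n\geq 9$) furnishes a subspace $M\subseteq\reals^n$ with $\dim M=n-3$ such that no rational subspace $K$ of dimension $n-3$ satisfies $\sign(K)=\sign(M)$. This is exactly the negation of the subspace condition above, so the contrapositive of Theorem~\ref{thm:equivspmlin} gives that rational realization is \emph{not} always possible for $n\times m$ sign patterns of minimum rank $n-3$; that is, some such pattern (for some $m$) fails to admit a rational rank-$(n-3)$ realization, which is the claim. If one wishes to name $m$ explicitly, I would follow the construction in the forward direction of the proof of Theorem~\ref{thm:equivspmlin}: let $s_1,\dots,s_m$ enumerate the finitely many sign vectors in $\sign(M)$, set $m=|\sign(M)|$, and let $A$ be the $n\times m$ sign pattern whose $i$th column is $s_i$; then $\mr(A)=n-3$ and $A$ is a witness.

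The argument is short because essentially all of the difficulty has been pushed into the earlier results: the genuinely hard input is the $9\times 9$ example of Jing et al.\ behind Corollary~\ref{cor:dim3}, and oriented matroid duality (Theorem~\ref{thm:duality}) converts that rank-$3$ obstruction into the rank-$(n-3)$ obstruction $M$ via Lemma~\ref{lem:rationaln-3}. Consequently the only points that require care are bookkeeping: correctly matching the existential quantifier on $m$ in the statement against the universal quantifier on $m$ in Theorem~\ref{thm:equivspmlin}, and, if the explicit witness $A$ is presented, confirming $\mr(A)=n-3$ rather than merely $\mr(A)\leq n-3$ --- the lower bound follows from Observation~\ref{Obs1}, since any $B\in Q(A)$ has column space containing all of $\sign(M)$ and hence dimension at least $\dim M=n-3$. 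I expect no real obstacle beyond this quantifier and bookkeeping step.
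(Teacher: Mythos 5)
Your proposal is correct and follows exactly the paper's route: the paper's (implicit) proof is precisely to combine Lemma~\ref{lem:rationaln-3} with the contrapositive of Theorem~\ref{thm:equivspmlin}, which is what you do. Your added bookkeeping (the explicit witness $A$ built from $\sign(M)$ and the lower-bound check via the echelon argument behind Observation~\ref{Obs1}) is sound and only makes explicit what the paper leaves to the reader.
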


However, this leaves open the following question.

\begin{problem}  Is it true that for each integer $n\geq 9$, there exists an $n \times n$ sign pattern matrix with minimum rank $n-3$ for which rational realization is not possible?
\end{problem}
 
Another natural question is the following. 
\begin{problem}  Is it true that for every 3 dimensional subspace $L$ of $\mathbb R^8$, there is a rational subspace $K$ such that sign$(L)$=sign$(K)$? 
\end{problem}

There are connections between rational realization of the minimum ranks of sign patterns and the existence of rational solutions of  certain matrix equations, as indicated in \cite{A3}. Using this connection and Theorem \ref{thm:mr=n-2} for the case of mr$(A)=n-2$, we are able to prove the following result.

 \begin{theorem} \label{thm:BC=0}
 Suppose that $B$, $C$ and $E$ are real matrices such that $BC=E$. If either $E$ has 2 rows or 2 columns, then there exist rational matrices $\tilde B$, $\tilde C$ and $\tilde E$ such that sign($\tilde B)$=sign$(B)$, sign($\tilde C)=$ sign$(C)$, sign($\tilde E)=$ sign$(E)$, and $\tilde B \tilde C =\tilde E$.
  \end{theorem}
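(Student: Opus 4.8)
The plan is to encode the equation $BC=E$ as a minimum-rank problem for a sign pattern of the "number of rows minus two" type, so that Theorem~\ref{thm:mr=n-2} applies directly. First I would reduce to the case where $E$ has exactly $2$ rows: since $BC=E$ is equivalent to $C^TB^T=E^T$, and $E^T$ has $2$ rows precisely when $E$ has $2$ columns, the two-column case follows from the two-row case by transposing the rational matrices produced. So assume $B$ is $2\times k$, $C$ is $k\times n$, and $E=BC$ is $2\times n$. I would then form the $(k+2)$-row block matrix
\[
A=\begin{bmatrix} I_k & C \\ B & E \end{bmatrix}
\]
and work with the sign pattern $\sign(A)$. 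The crucial claim is that $\mr(\sign(A))=k=(k+2)-2$. For the upper bound, left-multiplying $A$ by $\begin{bmatrix} I_k & 0 \\ -B & I_2\end{bmatrix}$ gives $\begin{bmatrix} I_k & C \\ 0 & E-BC\end{bmatrix}=\begin{bmatrix} I_k & C \\ 0 & 0\end{bmatrix}$ since $E=BC$, so $\rank(A)=k$. For the lower bound, any matrix in $Q(\sign(A))$ has its top-left $k\times k$ block lying in $Q(\sign(I_k))$, hence equal to a diagonal matrix with positive diagonal entries; its first $k$ rows are therefore linearly independent and its rank is at least $k$. Thus $\mr(\sign(A))=(k+2)-2$, which is exactly the hypothesis of Theorem~\ref{thm:mr=n-2}.

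Applying Theorem~\ref{thm:mr=n-2}, I obtain a rational matrix $\tilde A\in Q(\sign(A))$ with $\rank(\tilde A)=k$. Partitioning conformally as $\tilde A=\begin{bmatrix}\tilde A_{11} & \tilde A_{12} \\ \tilde A_{21} & \tilde A_{22}\end{bmatrix}$, the block $\tilde A_{11}$ is a rational invertible positive diagonal matrix (being in $Q(\sign(I_k))$), while $\sign(\tilde A_{12})=\sign(C)$, $\sign(\tilde A_{21})=\sign(B)$, and $\sign(\tilde A_{22})=\sign(E)$. Because $\tilde A_{11}$ is invertible, the top $k$ rows of $\tilde A$ are independent and, as $\rank(\tilde A)=k$, they span the row space; expressing the bottom two rows in terms of them yields
\[
\begin{bmatrix}\tilde A_{21} & \tilde A_{22}\end{bmatrix}=\bigl(\tilde A_{21}\tilde A_{11}^{-1}\bigr)\begin{bmatrix}\tilde A_{11} & \tilde A_{12}\end{bmatrix},
\]
and in particular $\tilde A_{22}=\tilde A_{21}\tilde A_{11}^{-1}\tilde A_{12}$.

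Finally I would set $\tilde B=\tilde A_{21}$, $\tilde C=\tilde A_{11}^{-1}\tilde A_{12}$, and $\tilde E=\tilde A_{22}$, all of which are rational. Then $\tilde B\tilde C=\tilde A_{21}\tilde A_{11}^{-1}\tilde A_{12}=\tilde A_{22}=\tilde E$. Since $\tilde A_{11}^{-1}$ is a positive diagonal matrix, left-multiplication by it scales the rows of $\tilde A_{12}$ by positive scalars and preserves every sign, so $\sign(\tilde C)=\sign(\tilde A_{12})=\sign(C)$; and $\sign(\tilde B)=\sign(B)$, $\sign(\tilde E)=\sign(E)$ by construction. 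This gives the desired rational realization, and the two-column case follows by transposing.

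The main obstacle, or rather the crux of the argument, is recognizing the block construction and, in particular, that the identity block $I_k$ does double duty: it forces $\mr(\sign(A))$ to equal exactly the number of rows minus two, so that Theorem~\ref{thm:mr=n-2} becomes applicable, and after rationalization its forced positive-diagonal form is precisely what lets the factorization be read off while preserving the sign patterns of $B$, $C$, and $E$ \emph{separately}. Once this device is in place, the remaining steps are routine block manipulations and the elementary fact that scaling rows or columns by positive scalars preserves sign patterns.
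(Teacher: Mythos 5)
Your proposal is correct and is essentially the paper's own proof: the same bordered block matrix with an identity block, the same appeal to Theorem~\ref{thm:mr=n-2} (you use the rows-minus-two form, the paper the columns-minus-two form, which are equivalent by transposition), and the same extraction of $\tilde B$, $\tilde C$, $\tilde E$ via the vanishing Schur complement of the positive diagonal block. You merely spell out details the paper leaves implicit, such as why the minimum rank is exactly $k$ and why the Schur complement must vanish.
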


\begin{proof}
Without loss of generality, assume that $E$ has two columns. Consider the $2 \times 2$ block matrix 
$$M= \begin{bmatrix}   
I_n & C \\
B & E 
\end{bmatrix}.$$ 
Observe that $M$ has $n+2 $ columns and the Schur complement of $I_n$ in $M$ is 0. Hence, rank$(M)=n$. It follows that the minimum rank of the sign pattern sign$(M)$ is $n$. Hence, from Theorem \ref{thm:mr=n-2}, there is a rational matrix 
$$\tilde M = \begin{bmatrix}   
D & C_1 \\
B_1 & E_1 
\end{bmatrix}$$ 
of rank $n$ in $Q(\mbox{sign}(M))$. 

It follows that the  Schur complement of $D$ in $\tilde M$ is $E_1-B_1D^{-1}C_1=0$. The rational matrices $\tilde B =B_1$,  $\tilde C= D^{-1} C_1$ and $\tilde E =E_1$
clearly satisfy the desired properties. 
\end{proof}

Since the zero entries of the matrix $E$ in a real matrix equation $BC=E$ are the major obstructions for the existence rational solutions within the same corresponding sign pattern classes, the preceding theorem suggests the following conjecture. 

\begin{conjecture}
Let  $B$, $C$ and $E$ be real matrices such that $BC=E$. If all the zero entries of $E$ are contained in a submatrix with either  2 rows or 2 columns, then there exist rational matrices $\tilde B$, $\tilde C$, and $\tilde E$, such that sign($\tilde B)$=sign$(B)$, sign($\tilde C)=$ sign$(C)$, sign($\tilde E)=$ sign$(E)$, and $\tilde B \tilde C =\tilde E$.
\end{conjecture}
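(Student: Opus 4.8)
The plan is to reduce the conjecture to the already-established case of Theorem~\ref{thm:BC=0} and then to control the entrywise-nonzero part of $E$ by a perturbation argument. By transposing if necessary, I would assume that all zero entries of $E$ lie in two rows, say the first two. Write $B=\begin{bmatrix}B_0\\ B_1\end{bmatrix}$ and $E=\begin{bmatrix}E_0\\ E_1\end{bmatrix}$, where $B_0,E_0$ consist of the first two rows, $E_0$ contains every zero of $E$, and $E_1$ is entrywise nonzero. Then $B_0C=E_0$ and $B_1C=E_1$, and the essential difficulty — matching the zeros exactly — is concentrated in the two-row equation $B_0C=E_0$.

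First I would dispose of the zeros. Transposing $B_0C=E_0$ gives $C^{T}B_0^{T}=E_0^{T}$, where $E_0^{T}$ has two columns, so Theorem~\ref{thm:BC=0} applies directly and yields rational matrices with the same sign patterns as $C^{T}$, $B_0^{T}$, and $E_0^{T}$ whose product is the rational realization of $E_0^{T}$. Transposing back produces rational $\tilde B_0$ and rational $\tilde C$ with $\sign(\tilde B_0)=\sign(B_0)$, $\sign(\tilde C)=\sign(C)$, and $\sign(\tilde B_0\tilde C)=\sign(E_0)$, so in particular the exact zero pattern of $E_0$ is reproduced. The point of treating this block first is to fix a single rational $\tilde C$ to be reused for all remaining rows.

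Next I would handle the entrywise-nonzero rows. For each row $i\geq 3$, writing $\beta_i$ for the $i$th row of $B$, I need a rational $\tilde\beta_i$ with $\sign(\tilde\beta_i)=\sign(\beta_i)$ and $\sign(\tilde\beta_i^{T}\tilde C)=\sign(\beta_i^{T}C)$; since $E_1$ is entrywise nonzero, the target sign vector has no zeros, so this is an open condition. If the rational $\tilde C$ produced above can be taken arbitrarily close to $C$, then choosing a rational $\tilde\beta_i$ close to $\beta_i$ forces $\tilde\beta_i^{T}\tilde C$ to be close to $\beta_i^{T}C$, the $i$th row of $E_1$, and because that vector has only nonzero entries its signs are preserved by small perturbations. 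Assembling $\tilde B$ from $\tilde B_0$ and the $\tilde\beta_i$, together with $\tilde C$ and $\tilde E=\tilde B\tilde C$, would then complete the construction.

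The hard part is exactly the closeness demanded in the last step. The proof of Theorem~\ref{thm:BC=0} passes through Theorem~\ref{thm:mr=n-2} and the existence of a rational sign-matching subspace (Theorem~\ref{thm:equivspmlin} and Lemma~\ref{lem:rationalsign}), none of which asserts that the rational realization may be chosen near the given real one. Thus the genuine obstacle is to upgrade Theorem~\ref{thm:mr=n-2} to a density statement — that for a minimum-rank-$(n-2)$ pattern the rational realizations are dense in the real realization space — or, equivalently, to show that the two-row (essentially rank-two) structure governing $B_0C=E_0$ admits rational realizations of its sign pattern arbitrarily close to the given one. I expect this to be genuinely delicate, because it requires the exact zero constraints from $E_0$ and the continuous perturbation used for $E_1$ to be reconciled simultaneously; the strong flexibility of two-dimensional sign-matching (Theorem~\ref{thm:dim2rat}, together with the bounded integer realizations of Li et al.) is the most promising tool, but turning it into the needed uniform density result is the step I do not see how to make routine, which is presumably why the statement is left as a conjecture.
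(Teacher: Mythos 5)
The statement you set out to prove is left as an \emph{open conjecture} in the paper: the authors give no proof of it, only the remark that Theorem~\ref{thm:BC=0} ``suggests'' it. So there is no proof of theirs to compare against, and the real question is whether your argument settles the conjecture. It does not, and you say so yourself. Your reduction is sound as far as it goes: assuming (after transposing if necessary) that all zeros of $E$ lie in the first two rows, Theorem~\ref{thm:BC=0} applied to $B_0C=E_0$ does produce rational $\tilde B_0$, $\tilde C$ with the required sign patterns, and \emph{if} $\tilde C$ could in addition be chosen arbitrarily close to $C$, then for each remaining row (where $\beta_i C$ is entrywise nonzero) any rational $\tilde\beta_i$ with $\sign(\tilde\beta_i)=\sign(\beta_i)$ sufficiently close to $\beta_i$ would preserve the finitely many strict sign inequalities, and $\tilde E=\tilde B\tilde C$ would finish the proof. (Incidentally, the transposition in your first step is unnecessary, since Theorem~\ref{thm:BC=0} already covers the two-row case.)

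The gap is the density claim, and it is genuine, not a technicality. Nothing in the paper's machinery controls how far the rational realization sits from the given real one: Theorem~\ref{thm:dim2rat} produces \emph{some} rational $2$-dimensional subspace $K$ with $\sign(K)=\sign(L)$, with no metric relation to $L$; Lemma~\ref{lem:rationalsign} does contain a limiting argument, but the approximation there takes place inside an already-fixed rational subspace, not near the original data; and the proof of Theorem~\ref{thm:BC=0} inherits this, since the rational matrix $\tilde M$ of rank $n$ in $Q(\mbox{sign}(M))$ is in no sense near $M$. Upgrading Theorem~\ref{thm:mr=n-2} to the assertion that rational minimum-rank realizations are \emph{dense} among the real ones is plausibly as hard as the conjecture itself, and it could conceivably fail even though the conjecture holds, since your reduction is only one route to producing $(\tilde B,\tilde C,\tilde E)$. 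So what you have is a correct conditional reduction together with a clearly identified missing ingredient; that the missing ingredient resists the paper's techniques is precisely why the statement remains a conjecture there.
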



\section{Further results on minimum ranks and  sign vectors of subspaces  }

We now investigate the minimum ranks of sign patterns further using sign vectors and duality. 

Similar to the concept of mr$(A)$, the {\it maximum rank} of a sign pattern matrix $A$, denoted  MR$(A)$,  is the maximum of
the ranks of the real matrices in $Q(A)$.  It is well-known that  
 MR$(A)$ is the maximum number of nonzero
 entries of $A$ with no two of the nonzero entries in the same row or in the same column. By a theorem of Konig, 
 the minimal number of lines (namely, rows and columns) in $A$ that cover all of the nonzero entries of 
 $A$ is equal to the maximal number of nonzero entries  in $A$, no two of which are on the same line. This common number is also called the {\it term rank} \cite{Hall07}. In contrast to the rational realization problem of the minimum rank, we note that through diagonal dominance, it can be easily seen that for every sign pattern matrix $A$, MR$(A)$ can always be achieved by a rational matrix. 

Sign pattern matrices $A$ that require a unique rank (namely, $ \mbox{MR}(A)=\mbox{mr}(A)$) were characterized by  Hershkowitz and  Schneider in \cite{Her93}. 

It is shown  in \cite{A2} that rational realization of the minimum rank of $A$ is always possible if MR$(A) -  \mr(A)=1$. The following conjecture from \cite{A2} remains open. 

\begin{conjecture}\cite{A2} \label{conj:gap2}
For every sign pattern matrix $A$ with MR$(A) - \text{mr}(A)=2$,  there is a rational realization of the minimum rank. 
\end{conjecture}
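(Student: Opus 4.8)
The plan is to attack the conjecture by combining the term-rank normal form forced by the hypothesis with the already-established rationalizability in the minimum rank $n-2$ (Theorem~\ref{thm:mr=n-2}) and $2$-dimensional (Theorem~\ref{thm:dim2rat}) cases. First I would reduce to a normal form: after deleting zero lines and, if convenient, transposing, one may assume $A$ is condensed. Writing $r=\mr(A)$, the hypothesis gives MR$(A)=r+2$, and since MR$(A)$ is the term rank, König's theorem covers all nonzero entries of $A$ by $r+2$ lines, say $p$ rows and $q$ columns with $p+q=r+2$. Placing these first puts $A$, up to permutation, into the block form
$$\begin{bmatrix} A_{11} & A_{12}\\ A_{21} & 0\end{bmatrix},$$
in which the solid zero block occupies all lines outside the cover. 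Thus the gap-two hypothesis is exactly what forces a large block of structural zeros together with a thin ``active'' part.

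The next step is to pass to a full-rank factorization and read off the zero block as an orthogonality condition. Fix $B\in Q(A)$ with $\rank B=r$ and write $B=UV$ with $U$ of size $m\times r$ and $V$ of size $r\times n$. The zero block says $U_{R_2}V_{C_2}=0$, where $U_{R_2}$ consists of the rows of $U$ outside the cover and $V_{C_2}$ of the columns of $V$ outside the cover; equivalently, the subspaces $S=\operatorname{row}(U_{R_2})$ and $T=\operatorname{col}(V_{C_2})$ of $\reals^r$ satisfy $S\perp T$, so that $\dim S+\dim T\le r$. I would then argue that the cover sizes $p,q$ confine the genuinely rank-deficient behaviour to a complementary two-dimensional packet of sign-vector data, so that rational realization of $A$ reduces to rationalizing that packet. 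Here duality (Theorem~\ref{thm:duality}) together with Theorem~\ref{thm:dim2rat} would replace the relevant $2$-dimensional subspace by a rational one with the same sign vectors, and a Schur-complement reassembly in the spirit of the proof of Theorem~\ref{thm:BC=0} would glue the rationalized pieces back into a rational matrix in $Q(A)$ of rank $r$.

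The main obstacle is the coupling between MR and mr. Theorem~\ref{thm:equivspmlin} converts rational realization of the minimum rank into a statement about a single subspace, but it is insensitive to the maximum rank; and by Corollary~\ref{cor:dim3} the column space of $U$ need not be rationalizable once $r\ge 3$. Hence the entire force of the conjecture lies in showing that the term-rank normal form above always produces one of the \emph{rationalizable} configurations, i.e. that the defect created by a gap of exactly two is genuinely two-dimensional in the sense required by Theorem~\ref{thm:dim2rat}. Making this reduction rigorous---while simultaneously preventing the realized rank from dropping below $r$ (so MR stays at $r+2$) and from rising above $r$---is precisely where new ideas appear to be needed, and I expect it to be the crux. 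A possible alternative route is induction on the number of lines, with the gap-one result of \cite{A2} and the unique-rank patterns of \cite{Her93} as base cases; there the difficulty shifts to controlling how MR$(A)-\mr(A)$ changes under deletion of a line, which is not monotone and must be handled by a careful choice of the line removed.
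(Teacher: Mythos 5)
First, a point of fact: the statement you are trying to prove is not proved in the paper at all. It is Conjecture~\ref{conj:gap2}, quoted from \cite{A2}, and the paper explicitly says it ``remains open.'' So there is no paper proof to compare against, and the only question is whether your proposal closes the gap. It does not, and you essentially say so yourself: the sentence asserting that ``the cover sizes $p,q$ confine the genuinely rank-deficient behaviour to a complementary two-dimensional packet of sign-vector data'' is exactly the conjecture in disguise, and nothing in the proposal makes it precise, let alone proves it. The sound parts of your sketch (K\"onig's theorem giving the block form with a zero block, the full-rank factorization $B=UV$, the orthogonality $S\perp T$ with $\dim S+\dim T\le r$) only translate the hypothesis; the object that must be rationalized is still the $r$-dimensional column space of $U$ (equivalently, the row space of $V$), and by Corollary~\ref{cor:dim3} such spaces need not admit rational subspaces with the same sign vectors once $r\ge 3$. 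The gap-two hypothesis constrains the sign pattern, not the dimension of the subspace whose sign vectors must be reproduced rationally, and you give no mechanism by which the constraint $\dim S+\dim T\le r$ isolates a $2$-dimensional subproblem to which Theorem~\ref{thm:dim2rat} applies.

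Two further steps would fail as described even if that reduction existed. The ``Schur-complement reassembly in the spirit of Theorem~\ref{thm:BC=0}'' runs backwards: that proof does not glue rational pieces together, it first rationalizes a whole bordered matrix using Theorem~\ref{thm:mr=n-2} (which is available there precisely because the bordered matrix has minimum rank equal to its number of columns minus two, thanks to the $I_n$ block) and then extracts the factors. A general gap-two pattern has no such identity block, so the minimum rank of any bordered matrix you build is not controlled, and gluing locally rationalized pieces back into $Q(A)$ while keeping the rank exactly $r$ requires compatibility conditions you never address. Likewise, your fallback induction on lines founders on the point you concede: $\mathrm{MR}-\mathrm{mr}$ is not monotone under line deletion, and no rule for choosing the deleted line is offered. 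In short, the proposal is a reasonable research plan whose decisive step --- showing the defect is ``genuinely two-dimensional'' --- is missing, which is why the statement remains a conjecture.
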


In \cite{A2}, the study of sign patterns $A$ with MR$(A) -  \mr(A)=1$ is reduced to the study of $m \times n$ sign patterns $A$ such that MR$(A)=n$ and mr$(A)=n-1$. 
In this connection,  using sign vectors and duality, we obtain the following characterization of the  $m\times n$ sign patterns $A$ such that MR$(A)=n$ and mr$(A)=n-1$ (which was raised as an open problem in \cite{Bru10}). Since the condition MR$(A)=n$ is easily checked, it suffices to characterize $m \times n$ sign patterns $A$ with mr$(A)=n-1$.

\begin{theorem}  \label{thm:mr=n-1} Let $A$ be an  $m\times n$ sign pattern and let row$(A)$ denote the set of the row vectors of $A$.  Then mr$(A)=n-1$ if and only if the following two conditions hold.
\begin{itemize}
\item[(a).] There is a nonzero sign vector $x\in \{+, -, 0\}^n$ such that $\text{row}(A)\subseteq x^\perp$. 

\item[(b).] For every two dimensional subspace $L$ of $\mathbb R^n$, $ \text{row}(A)\not\subseteq \text{sign}(L)^\perp$. 
\end{itemize}
\end{theorem}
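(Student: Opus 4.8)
The plan is to convert the equation $\mr(A)=n-1$ entirely into statements about sign vectors of subspaces, and then to recognize conditions (a) and (b) as the two one-sided inequalities $\mr(A)\le n-1$ and $\mr(A)\ge n-1$, passing between a subspace and its orthogonal complement by the duality theorem (Theorem~\ref{thm:duality}). The engine is the following elementary dictionary between minimum rank and sign vectors of the row space: for every integer $r$ with $0\le r\le n$,
$$\mr(A)\le r \iff \text{there is a subspace } W\subseteq\reals^n \text{ with } \dim W=r \text{ and } \text{row}(A)\subseteq\sign(W).$$
For the forward direction I would take $B\in Q(A)$ with $\rank B=\mr(A)$, let $W$ be its row space, and, if necessary, enlarge $W$ to dimension $r$ (which only enlarges $\sign(W)$); each row of $A$ is then the sign vector of the corresponding row of $B$, which is a vector of $W$. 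For the converse, I would choose for each row $a_i$ of $A$ a vector $w_i\in W$ with $\sign(w_i)=a_i$ and assemble the matrix with rows $w_1,\dots,w_m$; it lies in $Q(A)$ and has rank at most $\dim W=r$.

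Next I would identify the sign vectors of a hyperplane. If $H=v^\perp$ for a nonzero $v\in\reals^n$ and $x:=\sign(v)$, then $H^\perp=\operatorname{span}(v)$ has $\sign(H^\perp)=\{0,x,-x\}$, so by Theorem~\ref{thm:duality}
$$\sign(H)=\sign(H^\perp)^\perp=\{0,x,-x\}^\perp=x^\perp,$$
the last equality holding because $c\perp 0$ for every $c$ and $c\perp x\iff c\perp(-x)$. Since every nonzero sign vector $x$ equals $\sign(v)$ for some nonzero $v$, the family $\{\sign(H): H \text{ a hyperplane of }\reals^n\}$ is exactly $\{\,x^\perp: x\in\{+,-,0\}^n\setminus\{0\}\,\}$. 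Combining this with the dictionary at $r=n-1$, the inequality $\mr(A)\le n-1$ is equivalent to the existence of a hyperplane $H$ with $\text{row}(A)\subseteq\sign(H)$, hence to the existence of a nonzero sign vector $x$ with $\text{row}(A)\subseteq x^\perp$; that is, $\mr(A)\le n-1$ is exactly condition~(a).

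For condition (b) I would argue contrapositively. The negation of (b) asserts that there is a $2$-dimensional subspace $L$ with $\text{row}(A)\subseteq\sign(L)^\perp$; by Theorem~\ref{thm:duality} this set equals $\sign(L^\perp)$, and as $L$ ranges over all $2$-dimensional subspaces the complement $W:=L^\perp$ ranges over all $(n-2)$-dimensional subspaces. Thus $\neg$(b) is equivalent to the existence of an $(n-2)$-dimensional $W$ with $\text{row}(A)\subseteq\sign(W)$, which by the dictionary at $r=n-2$ means precisely $\mr(A)\le n-2$. Hence (b) is equivalent to $\mr(A)\ge n-1$. Intersecting the two equivalences, (a) and (b) hold simultaneously if and only if $\mr(A)\le n-1$ and $\mr(A)\ge n-1$, i.e. $\mr(A)=n-1$, which is the claim.

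The main obstacle is not any single hard estimate but getting the sign-vector bookkeeping exactly right: verifying the identity $\{0,x,-x\}^\perp=x^\perp$ from the combinatorial definition of $\perp$, and applying Theorem~\ref{thm:duality} in the correct direction so that codimension-one and codimension-two subspaces are matched, respectively, with single sign vectors and with $2$-dimensional subspaces. Everything else reduces to the dictionary above together with the observation that a subspace of dimension below the target may always be enlarged without shrinking its set of sign vectors.
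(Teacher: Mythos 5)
Your proof is correct, and it runs on the same two ingredients as the paper's: Theorem~\ref{thm:duality} and the correspondence between rank-$r$ matrices in $Q(A)$ and $r$-dimensional subspaces $W$ with $\text{row}(A)\subseteq\sign(W)$. The difference is in the organization. You first prove the general dictionary ($\mr(A)\le r$ iff such a $W$ of dimension $r$ exists), which is in substance the paper's later Theorem~\ref{thm:mr=r}, and then obtain the stated theorem by specializing: at $r=n-1$ you identify the sets $x^\perp$ with the sets $\sign(H)$ for hyperplanes $H$ (via duality and the identity $\{0,x,-x\}^\perp=x^\perp$), and at $r=n-2$ you use $\sign(L)^\perp=\sign(L^\perp)$ to convert condition (b) into the statement $\mr(A)\ge n-1$. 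The paper instead proves the $n-1$ case directly: for (a) in the forward direction it takes the sign vector of a null vector of a rank-$(n-1)$ realization (needing only the remark that $c^Tx=0$ implies $\sign(c)\perp\sign(x)$, not the duality theorem), and both reverse implications are handled by explicit row-by-row constructions and contradiction; duality enters only in condition (b). The paper then notes that the general Theorem~\ref{thm:mr=r} follows by a similar argument. Your packaging buys a cleaner logical split of the biconditional into the two one-sided inequalities, (a) $\iff\mr(A)\le n-1$ and (b) $\iff\mr(A)\ge n-1$, and it yields the general characterization at no extra cost; the paper's direct route is more self-contained for this one theorem and uses duality more sparingly. Both arguments are sound.
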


\begin{proof} Assume that mr$({ A})=n-1$.  Then there is a real matrix $A_1 \in Q(A)$ such that rank$(A_1)=n-1$. Let $v_0$ be a nonzero vector in the null space  Null$(A_1)$ and let $x=$ sign$(v_0)$.  Then it is clear that  $\text{row}(A) \subseteq x^\perp$, namely, (a) holds. Further, consider any two dimensional  subspace $L$ of $\mathbb R^n$.   Suppose that 
$\text{row}(A) \subseteq  \text{sign}(L)^\perp$. Then by Theorem \ref{thm:duality},  $\text{row}(A)\subseteq \text{sign}(L^\perp)$. Since $L^\perp$ is a subspace of dimension $n-2$ and every row vector of $A$ is contained in $\text{sign}(L^\perp)$, it is clear that the span of the real vectors in $L^\perp $ whose sign vectors agree with the rows of $A$ is a subspace of dimension at most $n-2$, that is to say,  there is a real matrix $A_2 \in Q(A)$ of rank at most $n-2$, which contradicts the assumption that mr$(A)=n-1$.

Conversely, assume that (a) and (b) hold. Let $x_1$ be the $(1, -1, 0)$ vector in $Q(x)$. From (a), it is easily seen  that there is a real matrix $A_1\in Q(A)$ such that $A_1x_1=0$. It follows that rank$(A_1)\leq n-1$ and hence mr$(A)\leq n-1$. Suppose that  mr$(A)\leq  n-2$. Then there is a matrix $A_2\in Q(A)$ with rank$(A_2) \leq n-2$. It follows that  Null$(A_2)$ has dimension at least 2. Hence,  Null$(A_2)$ contains a subspace $L\in \mathbb R^n$ of dimension 2.  It follows that 
$\text{row}(A) \subseteq \text{sign}(\text{Null}(A_2))^\perp  \subseteq \text{sign}(L)^\perp$, contradiction (b). Thus   mr$({ A})=n-1$. 
\end{proof}

We note that for an $m\times n$ sign pattern matrix $A$, the condition that each row vector of $A$ is in $\{u, v\}^\perp$ for some two nonzero sign vectors $u, v\in \{+, -, 0\}^n$ with $u\neq \pm v$ does not imply that mr$(A)\leq n-2$, as the following example shows. 

\begin{example} Let $A=\bmatrix +&+&+ \\ 0 &+&+ \endbmatrix$, $u=\bmatrix +\\ + \\ - \endbmatrix$, and $v=\bmatrix 0\\ + \\ - \endbmatrix$. It is obvious that each row vector of $A$ is in $\{u, v\}^\perp$ for the two nonzero sign vectors $u, v\in \{+, -, 0\}^3$ with $u\neq \pm v$. But clearly, mr$(A)=2$, so  mr$(A)\not\leq n-2=1$.
\end{example}

Using a similar argument as in the proof of Theorem \ref{thm:mr=n-1}, we can show the following more general result that characterizes sign patterns with minimum rank $r$, for each possible $r\geq 1$. 

\begin{theorem}  \label{thm:mr=r} Let $A$ be an  $m\times n$ sign pattern and let $\text{row}(A)$ denote the set of the row vectors  of $A$, and let $r$ be a any integer such that $1 \leq r \leq \min \{m, n\}$.   Then mr$(A)=r$ if and only if the following two conditions hold.
\begin{itemize}
\item[(a).] There is a subspace $L\subseteq \mathbb R^n$ with $\dim (L) =r$ such that  $\text{row}(A) \subseteq \text{sign} (L)$. 

\item[(b).] For every subspace $V$ of $\mathbb R^n$ with $\dim (L) =r-1$, $ \text{row}(A) \not\subseteq \text{sign}(V)$. 
\end{itemize}
\end{theorem}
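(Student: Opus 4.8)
The plan is to adapt the proof of Theorem~\ref{thm:mr=n-1} essentially verbatim, replacing the single null-space vector there by an $r$-dimensional column space. The key conceptual point is the standard correspondence between a rank factorization and the sign vectors of a subspace: if $F \in Q(A)$ has rank exactly $r$, then $F = UW$ where $U$ is $m \times r$ and $W$ is $r \times n$, and each row of $F$ is a real combination of the rows of $W$; thus each row vector of $A$ is the sign vector of some element of the row space of $W$, which is an $r$-dimensional subspace $L \subseteq \mathbb{R}^n$. Conversely, given a subspace $L$ with $\text{row}(A) \subseteq \text{sign}(L)$, one chooses for each row of $A$ a real vector in $L$ realizing that sign vector and stacks these vectors to form a matrix in $Q(A)$ whose rank is at most $\dim(L)$.

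First I would prove necessity. Assuming $\mathrm{mr}(A) = r$, take $A_1 \in Q(A)$ with $\mathrm{rank}(A_1) = r$ and let $L$ be the row space of $A_1$, so $\dim(L) = r$ and every row of $A$, being a row of $A_1$, lies in $\text{sign}(L)$; this gives (a). For (b), suppose toward a contradiction that some subspace $V$ with $\dim(V) = r-1$ satisfies $\text{row}(A) \subseteq \text{sign}(V)$. Then for each row of $A$ we may select a real vector in $V$ with the matching sign vector, and assembling these rows produces a matrix $A_2 \in Q(A)$ whose row space is contained in $V$, so $\mathrm{rank}(A_2) \leq r-1$, contradicting $\mathrm{mr}(A) = r$.

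Next I would prove sufficiency. Assuming (a) holds, choose for each row of $A$ a real vector in $L$ realizing its sign vector and stack these to obtain $A_1 \in Q(A)$ with row space inside $L$, so $\mathrm{rank}(A_1) \leq \dim(L) = r$, whence $\mathrm{mr}(A) \leq r$. If $\mathrm{mr}(A) \leq r-1$, take $A_2 \in Q(A)$ with $\mathrm{rank}(A_2) \leq r-1$; its row space is then a subspace of dimension at most $r-1$ that can be enlarged to a subspace $V$ of dimension exactly $r-1$, and every row of $A$ lies in $\text{sign}(\text{row space of } A_2) \subseteq \text{sign}(V)$, contradicting (b). Hence $\mathrm{mr}(A) = r$.

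I anticipate the only real subtlety is the routine-but-necessary step of realizing a finite collection of prescribed sign vectors by genuine real vectors inside a fixed subspace and controlling the resulting rank; this is precisely the mechanism already exploited in Theorem~\ref{thm:mr=n-1} and Theorem~\ref{thm:equivspmlin}, so no new machinery is required. One minor bookkeeping point worth care is the statement of condition (b), where the dimension should read $\dim(V) = r-1$ (the excerpt writes $\dim(L) = r-1$); I would state it with $V$ throughout. Because the argument is symmetric in the two directions and mirrors the $r = n-1$ case exactly, the proof is short, and the main obstacle is simply verifying that the sign-vector realization preserves the intended rank bounds rather than any genuinely new difficulty.
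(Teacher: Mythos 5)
Your proof is correct and is precisely the ``similar argument'' the paper gestures at: the paper gives no explicit proof of this theorem, but the direct row-space mechanism you use (row space of a rank-$r$ realization for necessity; realizing each prescribed sign vector by a real vector in the given subspace and stacking, for sufficiency and for both contradiction steps) is exactly the device embedded in the paper's proof of Theorem~\ref{thm:mr=n-1}, minus the duality detour that is unnecessary once the statement is phrased in terms of $\text{row}(A) \subseteq \sign(L)$ rather than orthogonal complements. You are also right that condition (b) should read $\dim(V)=r-1$; that is a typo in the paper's statement.
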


In particular, the above theorem gives a characterization of L-matrices (namely, $m \times n$ sign patterns $A$ with mr$(A)=n$). Note that every subset of $\{+, -, 0\}^n$ is of course contained in sign$(\mathbb R^n)$ and for every subspace $V$ of $\mathbb R^n$ of dimension $n-1$, sign$(V) = \text {sign}(x)^\perp$ for a nonzero vector $x$ in  $V^\perp$.

\begin{corollary}
\label{cor:mr=r} Let $A$ be an  $m\times n$ sign pattern and let $\text{row}(A)$ denote the set of the rows of $A$.   Then mr$(A)=n$ if and only if 
 for every nonzero sign vector $x \in \{+, -, 0\}^n$,   $ \text{row}(A) \not\subseteq  x^\perp$. 
\end{corollary}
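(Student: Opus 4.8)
The plan is to specialize Theorem~\ref{thm:mr=r} to the case $r = n$ and then simplify each of its two conditions into the single condition stated in the corollary. Taking $r = n$, condition (a) asks for an $n$-dimensional subspace $L \subseteq \reals^n$ with $\text{row}(A) \subseteq \sign(L)$; the only such subspace is $L = \reals^n$, and $\sign(\reals^n) = \{+,-,0\}^n$ contains every sign vector, so $\text{row}(A) \subseteq \sign(\reals^n)$ holds automatically for every sign pattern $A$. Hence $\mr(A) = n$ is equivalent to condition (b) alone, i.e.\ to the requirement that $\text{row}(A) \not\subseteq \sign(V)$ for every $(n-1)$-dimensional subspace $V \subseteq \reals^n$.

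The key step is then to identify $\sign(V)$ for an $(n-1)$-dimensional subspace $V$. Since $\dim V = n-1$, the orthogonal complement $V^\perp$ is a line, say $V^\perp = \operatorname{span}(\tilde x)$ for a nonzero real vector $\tilde x$, so that $\sign(V^\perp) = \{\mathbf 0, \sign(\tilde x), -\sign(\tilde x)\}$. Applying the duality theorem (Theorem~\ref{thm:duality}) to the subspace $V^\perp$ yields
$$\sign(V) = \sign\big((V^\perp)^\perp\big) = \sign(V^\perp)^\perp = \{\mathbf 0, \sign(\tilde x), -\sign(\tilde x)\}^\perp = \sign(\tilde x)^\perp,$$
where the last equality uses that orthogonality to $\mathbf 0$ is vacuous and that a sign vector is orthogonal to $-s$ exactly when it is orthogonal to $s$. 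Writing $x = \sign(\tilde x)$, we conclude $\sign(V) = x^\perp$ for a nonzero sign vector $x$.

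It then remains to check that this correspondence sweeps out exactly the nonzero sign vectors. As $V$ ranges over all $(n-1)$-dimensional subspaces, the generator $\tilde x$ of $V^\perp$ ranges over all nonzero real vectors, so $x = \sign(\tilde x)$ ranges over all nonzero sign vectors; conversely, given any nonzero sign vector $x$, choosing any real $\tilde x$ with $\sign(\tilde x) = x$ and setting $V = \tilde x^\perp$ realizes $\sign(V) = x^\perp$. Thus condition (b) at $r = n$ is precisely the requirement that $\text{row}(A) \not\subseteq x^\perp$ for every nonzero sign vector $x$, which is the statement of the corollary. I expect no genuine obstacle beyond the duality computation of the previous paragraph; the only points demanding care are verifying that condition (a) becomes vacuous at $r = n$ and that the passage from hyperplanes $V$ to nonzero sign vectors $x$ is surjective, so that no $(n-1)$-dimensional subspace is overlooked.
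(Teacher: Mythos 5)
Your proposal is correct and follows essentially the same route as the paper: the paper's remark preceding the corollary specializes Theorem~\ref{thm:mr=r} to $r=n$ by noting that condition (a) is automatic (every subset of $\{+,-,0\}^n$ lies in $\sign(\reals^n)$) and that $\sign(V)=\sign(x)^\perp$ for any hyperplane $V$ with $0 \neq x \in V^\perp$, which is exactly your duality computation. Your write-up merely makes explicit the duality argument and the surjectivity of the hyperplane-to-sign-vector correspondence that the paper leaves implicit.
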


The last characterization of L-matrices can be seen to be equivalent to the characterization of such matrices found in \cite{Bru95} (where a sign pattern  is defined to be an L-matrix if the minimum rank is equal to its number of rows). 

Let $S_{k, n}$ (respectively, $s_{k, n}$) denote the maximum cardinality (respectively, minimum cardinality)  of  sign$(L)$ as  $L$ runs over all $k$ dimensional subspaces of $\mathbb R^n$. In other words, 
$$S_{k, n} = \max_{\begin{smallmatrix} L \subseteq \mathbb R^n\\ \dim(L)=k \end{smallmatrix} }  |\text{sign}(L)|, \quad
s_{k, n} = \min_{\begin{smallmatrix} L \subseteq \mathbb R^n\\ \dim(L)=k \end{smallmatrix} }  |\text{sign}(L)|.
 $$
For every subspace $L$, as the nonzero vectors in sign$(L)$ occur in disjoint pairs of vectors that are negatives of each other, it is clear that $|\text{sign}(L)|$ is odd. Thus $S_{k,n}$ and $s_{k,n}$ are always odd. 
For each $k \ (0\leq k \leq n-1)$, since every $k$ dimensional subspace of $\mathbb R^n$ is contained in a subspace of dimension $k+1$, it is clear that
$S_{k, n} \leq S_{k+1, n}$ and $s_{k, n} \leq s_{k+1, n}$. 

Obviously, $S_{0, n}=s_{0, n}=1$, $S_{1, n}=s_{1, n}= 3$, and $S_{n, n} = s_{n,n}=3^n.$  For a nonzero sign vector $x\in \{+, -, 0\}^n$ with exactly $t$ nonzero entries, it can be seen that 
$| x^\perp |= 3^{n-t}(3^t- 2(2^t-1))$, which attains its  maximum $3^{n}-2(2^n-1)$ when $t=n$ and attains its minimum $3^{n-1}$ when $t=1$. Thus by the duality theorem, we get $S_{n-1, n}=3^{n}-2(2^n-1)$ and $s_{n-1, n} =3^{n-1}.$   In particular, for $n=3$, the above argument shows that the only possible cardinalities of $|\text{sign}(L)|$ for 2-dimensional subspaces of $\mathbb R^3$ are 9 and 13.

By considering the reduced row echelon form of a matrix whose rows form a basis for a $k$-dimensional subspace $L$ of $\mathbb R^n$ and observing that  the components in the pivot columns of the sign vectors  of the vectors $L$ are independent and arbitrary,  it can be seen that $s_{k, n} \geq 3^k$, for each $k, \ 1\leq k \leq n$.  For the $k$-dimensional subspace $L$ spanned by the standard vectors $e_1, e_2, \dots, e_k$, it can be seen that equality in the last inequality can be achieved. Thus,  $s_{k, n} = 3^k$, for each $k, \ 1\leq k \leq n$. 
We record some of these results below. 

\begin{theorem} Let $n\geq 2$. Then $S_{n-1, n}=3^{n}-2(2^n-1)$,    and $s_{k, n} = 3^k$, for each $k, \ 1\leq k \leq n$.
\end{theorem}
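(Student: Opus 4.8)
The plan is to prove the two claimed equalities separately, and the second one ($s_{k,n}=3^k$) is essentially already established in the paragraph immediately preceding the theorem, so I would simply reference that argument. The genuine content is the formula $S_{n-1,n}=3^n-2(2^n-1)$, and here I would invoke the duality theorem together with a direct count of the cardinality of $x^\perp$ for a single sign vector $x$.

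First I would recall that, by Theorem~\ref{thm:duality} (Duality of oriented matroids), every $(n-1)$-dimensional subspace $L\subseteq\reals^n$ has $L^\perp$ one-dimensional, so $\sign(L)=\sign((L^\perp)^\perp)=\sign(L^\perp)^\perp$. Since $L^\perp=\operatorname{span}(v)$ for some nonzero $v$, the set $\sign(L^\perp)$ consists of the three sign vectors $\{0,\sign(v),-\sign(v)\}$ (the zero vector and the $\pm$ signs of $v$). Consequently $\sign(L)=\{0,x,-x\}^\perp=x^\perp$, where $x=\sign(v)$ is an arbitrary nonzero sign vector. Thus, as $L$ ranges over all $(n-1)$-dimensional subspaces, $|\sign(L)|$ ranges exactly over the values $|x^\perp|$ for nonzero sign vectors $x\in\{+,-,0\}^n$, and therefore $S_{n-1,n}=\max_x|x^\perp|$.

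Next I would carry out the counting of $|x^\perp|$ for a fixed nonzero $x$ with exactly $t$ nonzero entries, verifying the formula $|x^\perp|=3^{n-t}\bigl(3^t-2(2^t-1)\bigr)$ already quoted in the text. Orthogonality of a sign vector $c$ to $x$ only constrains $c$ on the $t$ positions where $x$ is nonzero, and the $n-t$ positions where $x=0$ are completely free, contributing the factor $3^{n-t}$. On the $t$ constrained positions, $c\perp x$ fails precisely when $c$ is nonzero on those coordinates and has strictly the same sign pattern as $x$ there, or strictly the opposite; an inclusion–exclusion count over which of the $t$ positions of $c$ are nonzero shows that the number of bad restrictions equals $2(2^t-1)$, giving $3^t-2(2^t-1)$ good ones. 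I would then observe that this expression is increasing in $t$, so it is maximized at $t=n$, yielding $|x^\perp|=3^n-2(2^n-1)$ and hence $S_{n-1,n}=3^n-2(2^n-1)$.

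For the second equality I would only need to point back to the preceding discussion: the reduced row echelon argument shows $s_{k,n}\geq 3^k$ because the sign vector entries in the $k$ pivot columns can be chosen freely and independently, and the subspace $L=\operatorname{span}(e_1,\dots,e_k)$ attains $|\sign(L)|=3^k$, giving $s_{k,n}=3^k$ for all $1\le k\le n$. The one step requiring care is the monotonicity claim that $3^t-2(2^t-1)$ is strictly increasing in $t$ (so that the maximum is genuinely at $t=n$); I expect this to be the only nonobvious point, and it follows from comparing consecutive differences, since passing from $t$ to $t+1$ multiplies $3^t$ by $3$ but $2^t$ only by $2$, so the gain $3\cdot 3^t-3^t=2\cdot 3^t$ outweighs the additional subtracted term $2\cdot 2^t$ whenever $t\ge 0$. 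With that verified, the theorem follows.
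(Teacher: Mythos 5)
Your approach coincides with the paper's: you defer $s_{k,n}=3^k$ to the reduced-row-echelon-form argument in the paragraph preceding the theorem (exactly as the paper does), and for $S_{n-1,n}$ you use Theorem~\ref{thm:duality} to write $\sign(L)=\sign(L^\perp)^\perp=x^\perp$ for a nonzero sign vector $x$, reducing the problem to maximizing $|x^\perp|$ over nonzero $x$; your count $|x^\perp|=3^{n-t}\bigl(3^t-2(2^t-1)\bigr)$ when $x$ has exactly $t$ nonzero entries is also correct.

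However, there is a genuine gap in the maximization step --- precisely the step you flagged as the only nonobvious point. The quantity to maximize is the product $g(t)=3^{n-t}\bigl(3^t-2(2^t-1)\bigr)$, whose first factor $3^{n-t}$ is \emph{decreasing} in $t$. You verify only that the second factor $f(t)=3^t-2(2^t-1)$ is increasing, and from this conclude that $g$ is maximized at $t=n$; that inference is invalid, since the product of a decreasing factor and an increasing factor need not peak at the right endpoint. In fact $g$ is not even strictly increasing: $g(1)=g(2)=3^{n-1}$, so no transfer of strict monotonicity from $f$ to $g$ can be correct as stated. What is actually needed is the comparison $g(t+1)\geq g(t)$, which after dividing both sides by $3^{n-t-1}$ reads $f(t+1)\geq 3f(t)$, a strictly stronger statement than the $f(t+1)\geq f(t)$ you checked. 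Your difference computation gives $f(t+1)-f(t)=2\cdot 3^t-2\cdot 2^t>0$, but the inequality that must be verified is $f(t+1)-3f(t)=2\cdot 2^t-4\geq 0$, which holds precisely for $t\geq 1$ (with equality at $t=1$, explaining why $g(1)=g(2)$). With this corrected computation, $g$ is weakly increasing on $\{1,\dots,n\}$ and hence maximized at $t=n$, giving $S_{n-1,n}=3^n-2(2^n-1)$; the remainder of your argument then goes through as written (the paper itself only asserts this maximization with ``it can be seen that,'' so supplying the correct inequality is exactly what a complete proof requires).
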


To determine $S_{2, n}$, we need some terminology and two results on sign patterns with minimum rank 2 given in \cite{Li13}. A sign pattern is said to be  \emph{condensed} if does not contain a zero row or zero column and no two rows or two columns are identical or are negatives of each other. Clearly, given any nonzero sign pattern $ A$, we can delete zero, duplicate or opposite rows and columns of $ A$ to get a condensed sign pattern matrix ${ A}_c$ (called the \emph{condensed sign pattern} of $ A$). Obviously, for any sign pattern $A$, mr$(A)=$mr$(A_c)$.

\begin{theorem} \cite{Li13} \label{thm:mr2char2} For any sign pattern $A$, mr$(A)=2$ if and only if $A_c$ has at least two rows, 
   each row  of ${ A_c}$ has at most one zero entry, and 
 there exist a  permutation sign pattern $P$  and a signature sign pattern $D$ such that each row  of  ${ A_c} DP$ is monotone (in the sense that $- \leq  0 \leq +$). 
\end{theorem}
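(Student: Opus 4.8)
The plan is to reduce everything to the condensed pattern and then to the planar geometry of rank-two factorizations. Since $\mr(A)=\mr(A_c)$, I would work throughout with $A_c$. First I would dispose of the two-rows condition: a rank-one real matrix has the form $uv^T$, so each of its rows has sign vector $\pm\sign(v)$ or $0$; hence every pattern with $\mr=1$ condenses to a single row, while $\mr=0$ forces the empty (zero) pattern. Thus $\mr(A_c)\ge 2$ holds exactly when $A_c$ has at least two rows, and it remains to prove that, given at least two rows, the one-zero condition together with the monotone condition is equivalent to $\mr(A_c)\le 2$. The key device is that $\mr(A_c)\le 2$ means there is $B\in Q(A_c)$ with $B=UV$, where $U$ is $m\times 2$ and $V$ is $2\times n$; writing $r_i\in\reals^2$ for the rows of $U$ and $c_j\in\reals^2$ for the columns of $V$, we get $B_{ij}=r_i^T c_j$, so the sign of each entry records whether the planar vectors $r_i$ and $c_j$ form an acute, right, or obtuse angle. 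Because $A_c$ is condensed, no two columns of $B$ can be parallel (parallel columns have equal or opposite sign vectors), and likewise for rows; in particular all $r_i,c_j$ are nonzero.

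For the forward direction I would read off the two conditions from this picture. Entry $(i,j)$ is zero iff $r_i\perp c_j$; since the vectors orthogonal to a fixed nonzero $r_i$ form a single line in $\reals^2$, any two columns producing zeros in row $i$ would be parallel, contradicting condensedness, so each row has at most one zero. For the monotone condition I would choose a generic direction $w$ (not orthogonal to any $c_j$), let the signature $D$ flip each column into the open half-plane $\{x : w^T x>0\}$, and let $P$ order the columns by angle inside that half-plane. Then for a fixed row the sign of $r_i^T c_j$ equals $\sign(\cos(\theta_j-\psi_i))$, where $\psi_i$ is the angle of $r_i$ and $\theta_j$ ranges over an open arc of length $\pi$; since consecutive zeros of the cosine are exactly $\pi$ apart, this sign changes at most once along the arc, so the row is monotone with respect to $-<0<+$ (in whichever direction its orientation dictates).

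For the converse I would replace $A_c$ by $A_c D P$ (which does not change the minimum rank) so that every row is monotone and has at most one zero. I would then realize it concretely: assign to column $j$ the vector $(1,t_j)$ with $t_1<\cdots<t_n$ and set $B_{ij}=a_i+b_it_j$, taking $b_i>0$ or $b_i<0$ according as row $i$ is nondecreasing or nonincreasing, and choosing $a_i$ so that the threshold $-a_i/b_i$ lands exactly at the $t_j$ of the zero entry, or strictly between the appropriate consecutive $t_j$'s when the row has no zero. The at-most-one-zero hypothesis is exactly what makes this placement feasible. Since $B=a\mathbf 1^T+b\,t^T$ (with $\mathbf 1$ the all-ones vector) is a sum of two rank-one matrices, $\rank B\le 2$, so $\mr(A_c)\le 2$; with the two-rows condition this forces $\mr(A_c)=2$.

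The main obstacle, and the point I would be most careful about, is the exact meaning of ``monotone'': a single $P$ and $D$ must work for all rows at once, and one cannot in general make every row \emph{nondecreasing} (a three-row pattern with the $r_i$ spread around the circle already obstructs a common chain order of the columns). What does hold, and what the angular-ordering argument delivers, is that one fixed half-plane and angular order make each row monotone \emph{in its own direction}. Establishing this uniform monotonicity, rather than a uniform orientation, is the crux; the rank-one reduction for the two-rows condition, the orthogonality argument for the one-zero condition, and the two-term realization for the converse are then routine.
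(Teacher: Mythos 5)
Your proposal cannot be compared against a proof in the paper, because the paper gives none: Theorem \ref{thm:mr2char2} is quoted from \cite{Li13} and used as a black box. Judged on its own terms, your argument is correct and self-contained. The rank-one reduction for the two-rows condition is right (a condensed pattern admitting a realization $uv^T$ has all row sign vectors equal to $\pm\sign(v)$ or zero, so it has at most one row); in the factorization $B_{ij}=r_i^Tc_j$ with $r_i,c_j\in\reals^2$, condensedness does force all $r_i,c_j$ nonzero and pairwise nonparallel, so two zeros in a row would put two columns on the single line $r_i^\perp$, giving the at-most-one-zero condition; the half-plane/angular-order argument (at most one zero of $\cos(\theta-\psi_i)$ on an open arc of length $\pi$) gives per-row monotonicity; and the converse realization $B_{ij}=a_i+b_it_j$, a sum of two rank-one matrices, is valid, with the at-most-one-zero hypothesis being exactly what makes the threshold placement possible. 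You are also right on the delicate interpretive point that ``monotone'' must be read per row, in each row's own direction, rather than as uniform nondecreasingness: the sign pattern of a $3\times 3$ skew-symmetric matrix, with rows $(0,-,+)$, $(+,0,-)$, $(-,+,0)$, has minimum rank $2$, and a short case check shows no signature and permutation make all three rows nondecreasing, while $D=\mathrm{diag}(+,+,-)$ followed by the column order $(1,3,2)$ makes every row monotone (two rows nonincreasing, one nondecreasing). This confirms both that your reading is the only one under which the cited theorem is true and that your angular construction proves exactly the right statement.
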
 

\begin{corollary} \label{cor:mr2} Let $\cal A$ be an $m\times n$ condensed sign pattern with minimum rank 2.  Then  $m\leq 2n$. 
\end{corollary}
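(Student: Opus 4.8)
The plan is to reduce the bound to a counting problem about monotone sign vectors, via the structural characterization in Theorem~\ref{thm:mr2char2}. Since $\mathcal A$ is already condensed, $\mathcal A = \mathcal A_c$, so the theorem supplies a signature sign pattern $D$ and a permutation sign pattern $P$ such that every row of the $m\times n$ sign pattern $\mathcal B = \mathcal A D P$ is monotone (in the sense $-\leq 0\leq +$) and has at most one zero entry. The first step I would record is that right multiplication by $D$ and by $P$ merely rescales columns by signs and permutes them, so it acts as a bijection on $\{+,-,0\}^n$ that commutes with entrywise negation; consequently $\mathcal B$ inherits the condensed property, and in particular its rows are pairwise distinct and no row is the negative of another.

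The heart of the argument is then to count the monotone vectors in $\{+,-,0\}^n$ having at most one zero. A monotone vector consists of a block of $-$'s, followed by a block of $0$'s, followed by a block of $+$'s; imposing at most one zero, I would split into the case of no zero (a block of $-$'s then a block of $+$'s, giving $n+1$ vectors as the cut point ranges over $0,\dots,n$) and the case of exactly one zero (giving $n$ vectors as the single zero's position ranges). This yields exactly $2n+1$ admissible vectors, and every row of $\mathcal B$ must be one of them. For $n\geq 2$ none of these is the zero vector, consistent with $\mathcal B$ having no zero row.

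Finally I would pin down which of these $2n+1$ vectors are negatives of one another, since that is the only constraint the condensed hypothesis imposes on a subset of them. Entrywise negation turns a nondecreasing vector into a nonincreasing one, so the negative of a monotone vector is again monotone only when the vector is constant; among our admissible vectors the unique such negation pair is $\{(+,\dots,+),(-,\dots,-)\}$. Every other admissible vector has its negative outside the admissible set, hence it cannot coincide with the negative of another row of $\mathcal B$ anyway. Therefore the condensed condition forbids only the simultaneous presence of the all-$+$ and all-$-$ rows, so $\mathcal B$, and hence $\mathcal A$, has at most $2n+1-1 = 2n$ rows. The only steps requiring genuine care are this negation analysis and the verification that column scaling and permutation preserve condensedness; the remainder is bookkeeping.
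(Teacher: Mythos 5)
Your proof is correct. The paper states this corollary without proof (it is quoted from \cite{Li13} as a companion to Theorem~\ref{thm:mr2char2}), and your argument---reducing to a condensed pattern of monotone rows with at most one zero, counting the $2n+1$ admissible vectors, and observing that the all-$+$/all-$-$ pair is the unique antipodal pair among them---is exactly the natural derivation that makes this a corollary of Theorem~\ref{thm:mr2char2}, with all the delicate points (condensedness preserved under $DP$, the negation analysis) handled properly.
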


We now determine the extremal cardinalities of sign$(V)$ for two dimensional subspaces $V$ of $\mathbb R^n\  (n\geq 2)$.

\begin{theorem}  For each $n\geq 2$, $S_{2, n}= 4n+1$.
\end{theorem}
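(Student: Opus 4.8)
The plan is to prove the upper bound and lower bound separately. For the \textbf{lower bound} $S_{2,n} \geq 4n+1$, I would exhibit an explicit $2$-dimensional subspace $L \subseteq \mathbb{R}^n$ achieving $4n+1$ sign vectors. A natural candidate is the column space of a matrix $\begin{bmatrix} a_1 & a_2 & \cdots & a_n \\ b_1 & b_2 & \cdots & b_n \end{bmatrix}^T$ whose $n$ rows are pairwise non-parallel nonzero vectors in $\mathbb{R}^2$, chosen so that no two rows are parallel or anti-parallel. Geometrically, each vector $(a_i, b_i)$ determines a line through the origin in $\mathbb{R}^2$; a generic vector $(s,t) \in L$ has $\sign$ in coordinate $i$ determined by which side of the line $a_i x + b_i y = 0$ the point $(s,t)$ lies on. These $n$ lines partition $\mathbb{R}^2 \setminus \{0\}$ into $2n$ open sectors, and there are $2n$ rays (the lines themselves, split at the origin). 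This yields $2n$ sign vectors from the open sectors, $2n$ from the rays, plus the zero vector, giving $4n+1$ distinct sign vectors provided the lines are distinct. So I would verify that distinctness of the $n$ lines guarantees distinctness of these $4n+1$ sign vectors.

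For the \textbf{upper bound} $S_{2,n} \leq 4n+1$, I would argue via the condensed sign pattern machinery from Theorem~\ref{thm:mr2char2} and Corollary~\ref{cor:mr2}. Given any $2$-dimensional subspace $L$, form the sign pattern matrix $A$ whose columns are all the nonzero sign vectors in $\sign(L)$; then $\mathrm{mr}(A) \leq 2$, and in fact equals $2$ since $\dim L = 2$. Passing to the condensed sign pattern, Corollary~\ref{cor:mr2} bounds the number of pairwise non-parallel, non-opposite nonzero columns by $2n$ (applied to the transpose, so that the relevant dimension $n$ plays the role of the number of rows). Since nonzero sign vectors occur in $\pm$ pairs, the total number of nonzero sign vectors is at most $2 \cdot 2n = 4n$, and adding the zero vector gives $|\sign(L)| \leq 4n+1$.

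The \textbf{main obstacle} I anticipate is the bookkeeping in the upper bound: correctly translating Corollary~\ref{cor:mr2} (which bounds $m \leq 2n$ for an $m \times n$ condensed pattern of minimum rank $2$) into a bound on the number of $\pm$-classes of sign vectors of $L$, and confirming that the condensation step does not discard any sign vectors beyond collapsing the $\pm$ pairs and removing duplicates. I would need to be careful that the columns of $A$ coming from $\sign(L)$, after condensation, are exactly the distinct nonzero sign vectors up to sign, so that the count $2n$ for condensed columns corresponds to $4n$ nonzero sign vectors. For the lower bound, the routine-but-necessary step is checking that $n$ distinct lines through the origin in $\mathbb{R}^2$ really do produce $4n$ distinct nonzero sign vectors; this is where one must ensure the rows of the generating matrix are genuinely pairwise non-proportional, which is where the hypothesis $n \geq 2$ and the generic choice of the $(a_i,b_i)$ enter.
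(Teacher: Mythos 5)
Your proposal is correct. Its upper bound is the paper's argument transposed: the paper takes $A$ to have the sign vectors of $V$ as its \emph{rows}, condenses, and applies Corollary~\ref{cor:mr2} to bound the number of rows of $A_c$ by $2n_1\leq 2n$, giving $|\sign(V)|\leq 4n_1+1\leq 4n+1$ --- exactly your count with rows and columns exchanged. The condensation subtlety you flag is real but resolves the way you hope: if two coordinates of $\mathbb{R}^n$ produce identical (or opposite) rows in your $A$, they do so uniformly over all of $\sign(L)$, so deleting one of them cannot merge two distinct columns; hence the condensed pattern retains exactly one column per $\pm$-pair of nonzero sign vectors. Where you genuinely differ is the lower bound. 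The paper writes down an explicit $2n\times n$ pattern $B$ (the stack of $T_1$, whose $i$th row has a single zero in position $i$ with $-$'s before and $+$'s after, and $T_2$, whose $i$th row has $i$ leading $-$'s followed by $+$'s), certifies $\mr(B)=2$ via the structural characterization in Theorem~\ref{thm:mr2char2}, and then takes the row space of a rank-$2$ realization, with the upper bound forcing equality. You instead build the subspace directly as the column space of an $n\times 2$ matrix with pairwise non-proportional rows and count sign vectors through the arrangement of $n$ distinct lines through the origin: $2n$ open sectors, $2n$ rays, and the origin, all with pairwise distinct sign vectors. These are at bottom the same example (the rows of $T_1$ and $T_2$ are precisely representatives of the $\pm$-pairs coming from your rays and sectors, suitably ordered), but your verification is more elementary and self-contained: it needs neither Theorem~\ref{thm:mr2char2} nor the upper bound to see that $|\sign(L)|$ equals exactly $4n+1$, whereas the paper's route stays inside its minimum-rank-$2$ machinery and leaves the final counting to the reader (``it can be seen that \dots\ the equality \dots\ can be achieved'').
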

\begin {proof} 

Let $V$ be any subspace  of $\mathbb R^n$ with dim$(V)=2$. Let $A$ be the sign pattern whose rows are the sign vectors in sign$(V)$.  Then clearly mr$(A)=2$. Note that $A$ has a zero row and the nonzero rows of $A$ occur in pairs of vectors that are negatives of each other. By deleting the zero row and choosing a representative out of each nonzero pair and possibly deleting some columns of $A$, we get the condensed sign pattern $A_c$ of $A$. By the preceding corollary, the number of rows of $A_c$ is at most $2n_1$, where $n_1$ is the number of columns of $A_c$. It is then clear that $| \text{sign}(V)|\leq 4 n_1 +1 \leq 4n+1$. 

By considering the  $2 n\times n$ sign pattern $B=\bmatrix T_1 \\T_2 \endbmatrix $, where 
$$T_1= \begin{bmatrix}  0 & + & \cdots & + \\ 
		- &  0  & \ddots & \vdots \\
		\vdots & \ddots & \ddots& + \\
		- & \cdots & - & 0 
\end{bmatrix},\quad \text{ and }\quad 
T_2 =
\begin{bmatrix}  - & + & \cdots & + \\ 
		- &  -  & \ddots & \vdots \\
		\vdots & \ddots & \ddots& + \\
		- & \cdots & - & - 
\end{bmatrix}, $$
it can be seen from Theorem \ref{thm:mr2char2} that mr$(B)=2$ and the equality $|\text {sign}(V)|= 4n+1$ can be achieved. 
\end{proof}

Due to the lack of a simple structural characterization of sign patterns with minimum rank 3 comparable to Theorem \ref{thm:mr2char2}, determining $S_{3, n}$ seems to be a difficult problem. We note that by considering the direct sum  $[+] \oplus B$, where $B$ is the $2(n-1)\times (n-1)$ sign pattern with mr$(B)=2$ as in the preceding proof, it can be seen that 
$$ S_{3, n} \geq 3(4(n-1)+1) = 3(4n-3).$$

\begin{problem} Find sharp bounds on $S_{k,n}$, for $3\leq k \leq n-2$. 
\end{problem}

\end{document}